\newtheorem{theorem}{Theorem}[section]
\newtheorem{lemma}[theorem]{Lemma}
\newtheorem{proposition}[theorem]{Proposition}
\newtheorem{corollary}[theorem]{Corollary}
\theoremstyle{definition}
\newtheorem{definition}[theorem]{Definition}
\newtheorem{example}[theorem]{Example}
\theoremstyle{remark}
\newtheorem{remark}[theorem]{Remark}
\numberwithin{equation}{section}
\begin{document}

\setcounter{page}{1}

\title[Short Title]{A study of reciprocal Dunford-Pettis-like properties on Banach spaces
}

\author[M. Alikhani]{Morteza Alikhani,$^1$}

\address{$^{1}$Department of Mathematics, University of Isfahan.}
\email{\textcolor[rgb]{0.00,0.00,0.84}{m2020alikhani@yahoo.com}}




\subjclass[2010]{Primary 46B20; Secondary 46B25, 46B28.}

\keywords{$p$-convergent operators, reciprocal Dunford-Pettis property and Dunford-Pettis property of order $p.$}


\begin{abstract}
In this article, we  study  the relationship between
$ p $-$ (V) $ subsets and $ p $-$ (V^{\ast})$ subsets of dual spaces.\ We investigate  the Banach space  $ X $ with the property that adjoint every $ p $-convergent operator $ T:X\rightarrow Y $  is weakly $ q $-compact, for every Banach space $ Y. $\ Moreover,
we define the notion of $ q $-reciprocal Dunford-Pettis$ ^{\ast} $  property  of order $ p$  on Banach spaces and obtain a characterization of Banach spaces with this property.\ Also,  the stability of
reciprocal Dunford-Pettis property of  order $ p $ for projective tensor
product is given.\
\end{abstract} \maketitle

\section{\textbf{Introduction and preliminaries}}
Numerous authors by studying localized properties, e.g.,   Dunford-Pettis sets, $ (L) $-sets, $ (V) $-sets and  $ (V^{\ast}) $-sets, showed that how these  notions can be used to study
more global structure properties.\ For instance,  Leavelle \cite{le}, by using the notion $ (L) $  sets, obtained a characterization of those Banach spaces with the reciprocal Dunford-Pettis property.\ Later on, Emmanuele \cite{e},  proved that a Banach space  $ X $ does not contain $ \ell_{1} $ if and only if any $ (L) $ subset of $ X $ is relatively compact.\
It is easy to verify that, every Dunford-Pettis subset of a dual space is an $ (L) $ subset, while the converse of implication is false.\
The relationship between $(L)$ subsets and Dunford-Pettis subsets of dual spaces obtained by Bator et al.\ \cite{bpo}.\ Recently, Li et al.\cite{ccl1} 
generalized the concepts $ (V ) $ and $ (V^{\ast} ) $ sets to the 
 to the
 $ p $-$ (V ) $ and $ p $-$ (V ^{\ast}) $ sets for $ 1\leq p\leq\infty. $\ It is easy to see that 
$ 1 $-$ (V ) $ sets are $ (V ) $ sets, $ \infty $-$ (V ) $ sets are $ (L) $ sets and $  1$-$ (V^{\ast}) $ sets are $ (V^{\ast}) $  sets.\ Note
that the definitions of $ p $-$ (V ) $ and $ p $-$ (V ^{\ast}) $ sets coincide with the definitions
of weakly $  p$-$ L $ sets and weakly $ p $-Dunford-Pettis sets given in \cite{g18}, respectively.\\ 
Inspired by the above works, we  obtain  relationship
between $ p$-$ (V ) $ subsets and $ p$-$ (V^{\ast} ) $ subsets of dual spaces ($ 1\leq p< \infty $).\ Also,
we study two  properties on Banach spaces, called the $ q $-reciprocal Dunford-Pettis property  of order $ p$ and  the $ q $-reciprocal-Dunford-Pettis$ ^{\ast} $ property  of order $ p$ ( $ 1\leq p\leq q \leq \infty$) in order to find a
necessary and
sufficient conditions, that every $ p $-$ (V) $ set in $ X^{\ast} $
 (every $ p $-$ (V^{\ast}) $ set in $ X $) is relatively weakly $ q $-compact.\ In addition, we investigate
the stability of reciprocal Dunford-Pettis$ ^{\ast} $ property  of order  $ p$ for some subspaces of  bounded linear operators.\ Note that, the our results are motivated by results in \cite{bpo} and \cite{g18}.\\

Throughout this paper  $ 1\leq p \leq \infty $ and $ 1\leq p \leq q\leq \infty,$ except for the cases where we consider other assumptions.\ Also, we suppose
  $ X,Y $ and $  Z$ are arbitrary Banach spaces, $p^{\ast}$ is the H$\ddot{\mathrm{o}}$lder conjugate of $p;$ if $ p=1,~~ \ell_{p^{\ast}} $
 plays the role of $ c_{0} .$\ The unit coordinate vector in $ \ell_{p} $ (resp.\ $ c_{0} $ or $ \ell_{\infty} $) is denoted
by $ e^{p}_{n} $
n (resp.\ $ e_{n} $).\
We denote the closed unit ball of $ X $  by $ B_{X}$ and the identity map on $X$ is denoted by $id_X.$\ The space $  X$ embeds in $ Y $ (in symbols $ X\hookrightarrow Y $) if $  X$ is isomorphic to a closed
subspace of $ Y. $\
We denote two isometrically isomorphic
spaces $ X $ and $  Y$ by $  X\cong Y.$\ Also, the topological dual of $  X$ is denoted by $ X^{\ast} $ and
 we use $ \langle   x^{\ast}, x \rangle $ or $ x^{\ast}(x) $ for the duality between $ x\in X $ and $ x^{\ast} \in X^{\ast} .$\
The space  of all bounded linear operators (compact operators)  from $ X $ into $ Y $ is denoted by $ L(X,Y) $ ( $ K(X,Y)$).\ The space of all $ w^{\ast} $-$ w $ continuous and $ w^{\ast} $-$  w$ continuous compact operators from $ X^{\ast} $ to $ Y $ will be denoted by  $ L_{w^{\ast}}(X^{\ast}, Y) $ and $ K_{w^{\ast}}(X^{\ast}, Y) ,$ respectively.\ The projective tensor product of two Banach
spaces $  X$ and $ Y $ will be denoted by $ X\widehat{\bigotimes}_{\pi}  Y .$\

A bounded linear operator $ T :X   \rightarrow Y$  is said to be completely continuous, if $ T $ maps weakly convergent sequences to norm convergent sequences.\ The set of all completely continuous operators from $ X $ to $  Y$ is denoted by $CC(X,Y) .$\ A Banach space $ X $ is said to have the Dunford-Pettis property, if for any Banach space $ Y $ every weakly compact operator $ T:X\rightarrow Y $ is completely continous.\ A Banach space $ X $ is said to have the reciprocal Dunford-Pettis property (in short, $ X $  has the $ (RDPP) $), if for any Banach space $ Y $ every completely continuous operator $ T:X\rightarrow Y $ is weakly compact \cite{gr}.\ Let us recall from \cite{An}, that a bounded
subset $ K$ of $ X $ is a Dunford-Pettis set if and only if
every weakly null sequence $(x^{\ast}_{n})_n $ in $ X^{\ast}, $ converges uniformly to zero on  the set $K.$\
A bounded subset $ K $ of $ X^{\ast} $ is called an $( L )$ set, if each weakly null sequence $ (x_{n})_{n} $ in $ X$ tends to $ 0 $ uniformly on $ K$ \cite{le}.

A sequence $(x_{n})_n$ in  $X$ is  called
weakly $p$-summable if $(x^{\ast}(x_{n}))_n\in \ell_{p}$ for each
$ x^{\ast}\in X^{\ast}.$\ The weakly  $\infty$-summable sequences  are precisely the weakly null sequences.\
 A sequence  $ (x_{n})_{n} $ in $  X$ is called weakly $ p $-convergent to  $ x\in X $ if the sequence  $ (x_{n}-x)_{n} $ is weakly $  p$-summable.\ The weakly $ \infty$-convergent sequences
are precisely the weakly convergent sequences.\ A sequence $ (x_{n})_{n} $ in X is called weakly $ p $-Cauchy if $ (x_{m_{k}}-x_{n_{k}}) _{k}$ is
weakly $ p $-summable for any increasing sequences $ (m_{k})_{k} $ and $ (n_{k})_{k} $ of positive integers.\
Note that, every weakly $  p$-convergent sequence is weakly $  p$-Cauchy, and the weakly $ \infty $-Cauchy sequences are precisely the weakly Cauchy sequences.\ We say that a subset $ K $ of $ X $ is called weakly $ p $-precompact,
if every sequence from $  K$ has a weakly $ p $-Cauchy subsequence.\ Note that the weakly $ \infty $-precompact sets are precisely the weakly precompact sets.\
A bounded linear operator $ T:X\rightarrow Y $ is called $ p $-convergent, if $  T$ maps
weakly $  p$-summable sequences into norm null sequences.\ The set of all $ p $-convergent
operators from $X$ into $Y$ is denoted by $C_p(X, Y).$\ A Banach space $ X $ has the $ p $-Schur property, if the identity operator on $ X $ is $ p $-convergent.\ A Banach space   $ X $ has the
 Dunford-Pettis property of order $p$ (in short  $ X $ has the $(DPP_{p}) $), if every weakly compact operator $  T:X \rightarrow Y$ is $ p $-convergent,
for any Banach space $  Y.$\ A bounded subset $ K $ of $ X^{\ast} $ is a $ p $-$ (V ) $  set, if $ \displaystyle \lim_{n\rightarrow\infty}\displaystyle \sup_{x^{\ast}\in K}\vert x^{\ast}(x_{n}) \vert =0,$
for every weakly $ p $-summable sequence $ (x_{n})_{n} $ in $ X. $\ A  bounded subset $ K $ of $ X $ is a $ p $-$ (V^{\ast} ) $ set, if $ \displaystyle \lim_{n\rightarrow\infty}\sup_{x\in K}\vert x^{\ast}_{n}(x) \vert =0,$
for every weakly $ p $-summable sequence $ (x^{\ast}_{n})_{n} $ in $ X^{\ast}. $\
A Banach space $ X $ has
Pelczy\'{n}ski's  property $ (V) $ of order $ p$ ( in short $ X $ has the $ p $-$ (V) $ property), if every $ p $-$ (V) $ set in $
X^{\ast}$ is relatively weakly compact.\ A Banach space $ X $ has   Pelczy\'{n}ski's  property $ (V^{\ast}) $ of order $  p$ (in short $ X $ has the $ p $-$ (V^{\ast}) $ property), if every $ p $-$ (V^{\ast}) $ set in $  X^{\ast}$ is relatively weakly compact.\ Let us recall from \cite{sk}, that $ \ell_{p}(X)$ denote the set of all sequences $ (x_{n})_{n} $ in $ X $ such that $  \displaystyle\sum_{n=1}^{\infty}\Vert x_{n}\Vert^{p}<\infty.$
A set $ K\subset X $ is said to be relatively $ p $-compact if there is a
sequence$ (x_{n})_{n} $ in $ \ell_{p}(X)$ such that $ K\subset \lbrace   \displaystyle \sum_{n=1} ^{\infty} \alpha_{n} x_{n} :(\alpha_{n})\in B_{\ell_{p^{\ast}}}\rbrace.$\ An operator
$ T\in L(X,Y) $ is said to be $  p$-compact if $  T(B_{X})$ is a relatively $ p $-compact set in $ Y. $\

A bounded subset $ K $ of $  X$ is said to be relatively weakly $  p$-compact (resp.\ weakly $  p$-compact) provided that every sequence in $ K $ has a weakly $  p$-convergent subsequence
with limit in $  X$ (resp.\ in $K$).\ Note that, the weakly $\infty$-compact sets are precisely the weakly compact.\ A bounded linear operator $ T:X\rightarrow Y $  is called weakly $ p $-compact if $ T(B_{X}) $ is relatively weakly $ p$-compact.\ The set of  all weakly $ p $-compact operators
$ T:X\rightarrow Y$  is denoted  by $ W_{p}(X,Y) .$\\
The reader is referred to \cite{AlbKal} for any unexplained notation or terminology.\
\section{ main results}
 Suppose that $ K $ is a bounded subset of $ X$  and
$ B(K) $ is the Banach space of all bounded real-valued functions
defined on $ K$, provided with the superemum norm.\ The natural evaluation map $ E : X^{\ast}\rightarrow B(K) $ defined by
$ E(x^{\ast})(x) = x^{\ast}(x)$ has been used by many authors to study properties of $ K.$\ Similarly, if $ K $ is a bounded
subset of $ X^{\ast}, $ the natural evaluation map $ E_{X} :X \rightarrow B(K) $  defined by $ E_{X}(x)(x^{\ast}) =x^{\ast}(x)$ (for instance, see \cite{bpo,e}).\\
 Inspired
by Theorem 3.1 of \cite{bpo}, we obtain some characterizations of notions $ p $-$ (V) $
sets and $ p $-$ (V^{\ast}) $ sets  which will be used in the
sequel.

\begin{lemma}\label{l1} The following statements hold:\\
$ \rm{(i)} $  If $ T \in L(X,Y) ,$ then
$ T^{\ast}(B_{Y^{\ast}}) $ is a $ p $-$ (V) $ subset of $ X^{\ast} $ if and only if
$ T $ is $ p $-convergent.\\
$ \rm{(ii)} $  If $ T \in L(X,Y) ,$ then
$ T(B_{X}) $ is a $ p $-$ (V^{\ast}) $ subset of $ Y$ if and only if
$ T^{\ast} $ is $ p $-convergent.\\
$ \rm{(iii)} $
A bounded subset $ K $ of $ X^{\ast} $ is a  $ p $-$ (V) $ set  if and only if $E_{X} :X \rightarrow B(K) $  is $ p $-convergent.\\
$ \rm{(iv)} $
A bounded subset $ K $ of $  X$ is a $ p $-$ (V^{\ast}) $ set if and only if $ E : X^{\ast}\rightarrow B(K) $ is $ p $-convergent.\\
$ \rm{(v)} $ A bounded subset $ K $ of $ X $ is a $ p $-$ (V^{\ast}) $  set if and only if there is a Banach space
$ Y $ and an operator $ T:Y\rightarrow X $ so that $ T $ and $ T^{\ast} $ are $ p $-convergent
and $ K\subseteq T(B_{Y}) .$\
\end{lemma}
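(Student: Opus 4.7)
The plan is to handle parts (i)--(iv) by unwinding the definition of a $p$-$(V)$ or $p$-$(V^{\ast})$ set in conjunction with the duality between an operator and its adjoint, and then deduce (v) from (ii) together with a concrete $\ell_{1}$-factorization.

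For (i), I would observe that for any weakly $p$-summable sequence $(x_{n})_{n}$ in $X$ one has $\sup_{y^{\ast}\in B_{Y^{\ast}}}|(T^{\ast}y^{\ast})(x_{n})| = \sup_{y^{\ast}\in B_{Y^{\ast}}}|y^{\ast}(Tx_{n})| = \|Tx_{n}\|$, so ``$T^{\ast}(B_{Y^{\ast}})$ is a $p$-$(V)$ set'' is literally ``$\|Tx_{n}\|\to 0$'', which is $p$-convergence of $T$. Part (ii) is symmetric: $\sup_{x\in B_{X}}|x_{n}^{\ast}(Tx)| = \|T^{\ast}x_{n}^{\ast}\|$ for weakly $p$-summable $(x_{n}^{\ast})_{n}$ in $Y^{\ast}$. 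For (iii) and (iv), since $\|E_{X}(x)\|_{B(K)}=\sup_{x^{\ast}\in K}|x^{\ast}(x)|$ and $\|E(x^{\ast})\|_{B(K)}=\sup_{x\in K}|x^{\ast}(x)|$, the definitions of $p$-$(V)$ and $p$-$(V^{\ast})$ sets translate verbatim into the $p$-convergence of $E_{X}$ and $E$, respectively.

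For the backward direction of (v), I would invoke (ii): $p$-convergence of $T^{\ast}$ forces $T(B_{Y})$ to be a $p$-$(V^{\ast})$ subset of $X$, and since $K\subseteq T(B_{Y})$ is bounded it inherits the property (the class of $p$-$(V^{\ast})$ sets is clearly hereditary under bounded subsets). For the forward direction, I would take $Y=\ell_{1}(K)$ indexed by $K$ itself, and define $T:\ell_{1}(K)\to X$ on the canonical basis by $T(e_{x})=x$, extended by linearity and continuity. Then $T$ is bounded by $\sup_{x\in K}\|x\|$ and $K\subseteq T(B_{\ell_{1}(K)})$. Because $\ell_{1}(\Gamma)$ enjoys the Schur property for every index set $\Gamma$, every weakly $p$-summable sequence in $\ell_{1}(K)$ is weakly null, hence norm null, so $T$ is automatically $p$-convergent. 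Finally, via the isometric identification $Y^{\ast}=\ell_{\infty}(K)\cong B(K)$, the adjoint $T^{\ast}:X^{\ast}\to B(K)$ sends $x^{\ast}$ to the function $x\mapsto x^{\ast}(x)$, which is precisely the evaluation map $E$ of (iv); since $K$ is a $p$-$(V^{\ast})$ set, (iv) delivers the $p$-convergence of $T^{\ast}$.

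The only genuine subtlety is really a bookkeeping one: one must be comfortable with $\ell_{1}(\Gamma)$ for $\Gamma$ of arbitrary cardinality and remember that it still has the Schur property (any weakly null sequence has countable support, reducing to the classical $\ell_{1}$ case). Once this is internalized, all five parts reduce to an exchange of suprema with norms and a single application of an already-proved item, and no delicate argument is required.
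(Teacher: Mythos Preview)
Your proposal is correct and follows essentially the same approach as the paper: parts (i)--(iv) by direct definition unwinding, and (v) via the $\ell_{1}(K)$ construction with $T^{\ast}$ identified as the evaluation map $E$ so that (iv) applies. The only notable difference is that for the forward implication of (iv) the paper takes a slightly indirect route through $E^{\ast}$ and $p$-$(V)$ sets in $X^{\ast\ast}$, whereas your direct equality $\|E(x^{\ast})\|_{B(K)}=\sup_{x\in K}|x^{\ast}(x)|$ handles both directions at once and is cleaner.
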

\begin{proof}
The assertions $ \rm{(i)} $ and $ \rm{(ii)} $ when $ 1\leq p<\infty $ are in {\rm (\cite[Theorem 14]{g18})}, while for $ p=\infty $ they are in {\rm (\cite[Theorem 3.1]{bpo})}.\ Hence, we only prove $ \rm{(iii)} ,\rm{(iv)}$ and $ \rm{(v)} .$\ Note that  we adapt the proofs $ \rm{(i)},\rm{(ii)} $  and $ \rm{(iii)} $ of  {\rm (\cite[Theorem 3.1]{bpo})}.\\
$ \rm{(iii)} $ Suppose that $ K $ is a bounded subset of $ X^{\ast}. $\ Therefore  $ E_{X} $ is  $  p$-convergent if and only if  $ \Vert E_{X}(x_{n}) \Vert\rightarrow 0$ for each weakly $ p $-summable sequence $ (x_{n})_{n} $ in $ X $ if and only if
$$ \lim_{n} (\sup \lbrace \vert x^{\ast}(x_{n})\vert :x^{\ast} \in K\rbrace )=0$$ for each weakly $ p $-summable sequence $ (x_{n})_{n} $ in $ X $ if and only if   $K$ is a $ p $-$ (V) $ set.\\
$ \rm{(iv)} $ Suppose that $ K $ is a bounded subset of $ X $ and $ E$ is a $ p $-convergent operator.\ Thus $ E^{\ast} $ maps the unit ball of $ B(K) ^{\ast},$ to a
$ p $-$ (V) $ set in  $ X^{\ast\ast} .$\ However, if $ k \in K $ and $  \delta_{k}$ denotes the point mass at $ k, $ then
$ E^{\ast}(\lbrace\delta_{k}:k \in K\rbrace) = K, $
and  so $ K $ is a $ p $-$ (V) $ set in $ X^{\ast\ast} .$\ Hence $ K $ is a $ p $-$ (V^{\ast}) $ set in $ X. $\\
Conversely, suppose that $  K$ is a $ p $-$ (V^{\ast}) $ set in $  X$, and let $ E $
be the evaluation map.\ If $ (x^{\ast}_{n})_{n} $ is a weakly $ p $-summable sequence in $ X^{\ast}, $
 then
 $$\lim_{n}\Vert E(x^{\ast}_{n})\Vert=\lim_{n} (\sup\lbrace \vert x^{\ast}_{n}(x) \vert :x \in K\rbrace)=0, $$
and so $  E$ is a $ p$-convergent operator.\\
$ \rm{(v)} $ Suppose that $ K $ is a $ p $-$ (V^{\ast}) $ set in $ X $  and $ Y=\ell_{1} (K),$
 Define $ T:Y\rightarrow X $ by  $ T(f) =\sum_{k\in K} f(k)k $ for $f \in \ell_{1}(K) .$\ It is clear that
 $ T $ is a bounded linear operator such that $ K\subseteq T(B_{\ell_{1}(K)}).$\ Since $ \ell_{1} (K)$ has the $ p $-Schur property, the operator $  T$ is $ p $-convergent.\ Moreover, $ T^{\ast} $ is the
evaluation map $ E,$ and $ T^{\ast} $ is $ p $-convergent by $\rm{ (iv)}. $\
\end{proof}
It is easy to verify that, for each  $ 1\leq p\leq \infty, $ every  $ p $-$ (V^{\ast} ) $ subset of dual space is a  $ p $-$ (V ) $ set, while  the converse of implication is false.\
The following theorem continues our study of the relationship between
$ p $-$ (V) $ subsets and $ p $-$ (V^{\ast}) $ subsets of dual spaces.\
\begin{theorem}\label{t1}
Every $ p$-$ (V ) $ subset of $ X^{\ast} $ is a $ p $-$ (V^{\ast} ) $ set in $ X^{\ast} $ if and only if
$ T^{\ast\ast} $ is a $ p$-convergent operator whenever $ Y $ is an arbitrary Banach space and
$ T : X \rightarrow Y $ is a $ p $-convergent operator.
\end{theorem}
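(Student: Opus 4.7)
The plan is to leverage all five parts of Lemma \ref{l1}, which package the relationship between $p$-$(V)$ sets, $p$-$(V^{\ast})$ sets, and $p$-convergent operators. Parts (i) and (ii) handle the forward implication essentially by inspection, while parts (iii) and (iv) set up a reduction to a bidual computation in the reverse direction.

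For the forward direction, suppose every $p$-$(V)$ subset of $X^{\ast}$ is a $p$-$(V^{\ast})$ set of $X^{\ast}$, and let $T : X \to Y$ be $p$-convergent. By Lemma \ref{l1}(i) the set $T^{\ast}(B_{Y^{\ast}})$ is a $p$-$(V)$ subset of $X^{\ast}$, and by hypothesis it is therefore a $p$-$(V^{\ast})$ subset of $X^{\ast}$. Applying Lemma \ref{l1}(ii) to the operator $T^{\ast} : Y^{\ast} \to X^{\ast}$, we conclude that $(T^{\ast})^{\ast} = T^{\ast\ast}$ is $p$-convergent, which is exactly what is needed.

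For the reverse direction, let $K \subseteq X^{\ast}$ be a $p$-$(V)$ set. By Lemma \ref{l1}(iii) the evaluation map $E_{X} : X \to B(K)$, $E_{X}(x)(x^{\ast}) = x^{\ast}(x)$, is $p$-convergent, so by hypothesis its bidual $(E_{X})^{\ast\ast} : X^{\ast\ast} \to B(K)^{\ast\ast}$ is $p$-convergent as well. To apply Lemma \ref{l1}(iv) with $X$ replaced by $X^{\ast}$ and conclude that $K$ is $p$-$(V^{\ast})$ in $X^{\ast}$, I need to show that the evaluation $E : X^{\ast\ast} \to B(K)$, $E(x^{\ast\ast})(x^{\ast}) = x^{\ast\ast}(x^{\ast})$ for $x^{\ast} \in K$, is $p$-convergent. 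To bridge from $(E_{X})^{\ast\ast}$ to $E$, I would introduce the bounded restriction map $R : B(K)^{\ast\ast} \to B(K)$ defined by $R(F)(k) = F(\delta_{k})$, where $\delta_{k}$ is the point evaluation on $B(K)$ at $k \in K$. A short computation gives $(E_{X})^{\ast}(\delta_{k}) = k$ as elements of $X^{\ast}$, from which $R \circ (E_{X})^{\ast\ast} = E$ follows immediately. Since $R$ is bounded and $p$-convergent operators form a left ideal, $E$ is $p$-convergent, and Lemma \ref{l1}(iv) finishes the proof.

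The main obstacle is the reverse direction: although each individual step is routine once the factorization is in hand, the crucial insight is that the Dirac functionals $\delta_{k}$ allow $E$ to be extracted from $(E_{X})^{\ast\ast}$ via a bounded left factor, thereby transporting the $p$-convergence hypothesis from $E_{X}$ to $E$. Once the identity $R \circ (E_{X})^{\ast\ast} = E$ is verified, everything else reduces to bookkeeping against Lemma \ref{l1}.
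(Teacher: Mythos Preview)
Your proof is correct and follows essentially the same route as the paper: the forward direction is identical, and the reverse direction likewise starts from Lemma~\ref{l1}(iii) and the hypothesis to obtain that $E_X^{\ast\ast}$ is $p$-convergent, then exploits the identity $(E_X)^{\ast}(\delta_{x^{\ast}})=x^{\ast}$. The only cosmetic difference is in the final step: rather than introducing your restriction map $R$ and invoking Lemma~\ref{l1}(iv), the paper applies Lemma~\ref{l1}(ii) directly to $E_X^{\ast}$ to conclude that $E_X^{\ast}(B_{B(K)^{\ast}})$ is a $p$-$(V^{\ast})$ set containing $K$.
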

\begin{proof} We adapt the proof of  {\rm (\cite[Theorem 3.4]{bpo})}.\
Suppose that
$ T:X\rightarrow Y $ is a $ p $-convergent operator.\ The part (i) of Lemma of \ref{l1}, yields that $ T^{\ast}(B_{Y^{\ast}} ) $ is a
$ p $-$ (V ) $ set.\ By the hypothesis $ T^{\ast}(B_{Y^{\ast}} ) $ is a $ p $-$ (V^{\ast} ) $ set.\ By applying the part (ii) of Lemma \ref{l1}, we see that $ T^{\ast\ast} $ is a $p$-convergent operator.\

Conversely, suppose that
$ K $ is a $ p $-$ (V ) $ subset of $ X^{\ast}. $\ The part (iii)  of Lemma \ref{l1}, implies  that $ E_{X} $ is $ p $-convergent.\ Therefore, by the hypothesis,  $ E_{X}^{\ast\ast} $ is $ p$-convergent.\ Hence, if $ S $ denotes the unit ball of $ B(K)^{\ast}, $  then
$ E^{\ast}_{X}(S) $ is a $ p $-$ (V^{\ast} ) $ set.\ Since $K \subset E_{X}^{\ast}(S), $
$ K $ is a $ p $-$ (V^{\ast} ) $ set in $ X^{\ast}. $\
\end{proof}
\begin{corollary}\label{c1} {\rm (\cite[Theorem 3.4]{bpo})} Let $ X $ be a Banach space.\
Every $ (L ) $ subset of $ X^{\ast} $ is a  Dunford-Pettis  set in $ X^{\ast} $ if and only if
$ T^{\ast\ast} $ is completely continuous  whenever $ Y $ is an arbitrary Banach space and
$ T : X \rightarrow Y $ is a completely continuous operator.
\end{corollary}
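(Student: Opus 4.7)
The plan is to derive Corollary \ref{c1} directly from Theorem \ref{t1} by specializing to the endpoint $p = \infty$, so the task reduces to matching each ingredient of Theorem \ref{t1} with its classical counterpart. First I would recall, using the conventions of the preliminaries, that weakly $\infty$-summable sequences are exactly the weakly null sequences in the ambient space; as an immediate consequence, a bounded linear operator is $\infty$-convergent precisely when it is completely continuous. This takes care of the right-hand side of the biconditional: the clause ``$T^{\ast\ast}$ is $\infty$-convergent whenever $T\colon X\to Y$ is $\infty$-convergent'' becomes ``$T^{\ast\ast}$ is completely continuous whenever $T$ is completely continuous.''

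Next I would verify the set-side translations. Applying the definition of a $p$-$(V)$ set to a bounded set $K\subseteq X^{\ast}$ with $p=\infty$ gives the condition
\[
\lim_{n}\sup_{x^{\ast}\in K}|x^{\ast}(x_n)|=0
\]
for every weakly null sequence $(x_n)_n$ in $X$, which is precisely the definition of an $(L)$ set (and matches the remark in the introduction that $\infty$-$(V)$ sets are $(L)$ sets). Similarly, applying the definition of a $p$-$(V^{\ast})$ set to a bounded $K\subseteq X^{\ast}$, viewed as a subset of the Banach space $X^{\ast}$, with $p=\infty$ gives the condition that every weakly null sequence in $(X^{\ast})^{\ast}=X^{\ast\ast}$ converges uniformly to zero on $K$, which is exactly the definition of a Dunford-Pettis subset of $X^{\ast}$.

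With these identifications, the statement of Theorem \ref{t1} with $p=\infty$ reads verbatim as the statement of Corollary \ref{c1}, and no additional argument is required. There is no real obstacle here: the only point to double-check is that Theorem \ref{t1} is indeed formulated to include the endpoint $p=\infty$ (as is the case, since the paper works throughout under the convention $1\le p\le\infty$), so that specialization is legitimate and the corollary is immediate.
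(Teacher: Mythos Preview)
Your proposal is correct and matches the paper's approach: the paper states Corollary~\ref{c1} immediately after Theorem~\ref{t1} with no proof, the intended argument being exactly the specialization $p=\infty$ together with the identifications $\infty$-$(V)$ set $=$ $(L)$ set, $\infty$-$(V^{\ast})$ set in $X^{\ast}$ $=$ Dunford--Pettis set in $X^{\ast}$, and $\infty$-convergent $=$ completely continuous. Your verification of each of these identifications is accurate, and since the paper's standing convention is $1\le p\le\infty$, the endpoint case is indeed covered by Theorem~\ref{t1}.
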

\begin{definition} \label{d1} Suppose that $ 1\leq p\leq q\leq \infty. $\ We say that a Banach space $ X $ has  the $ q $-reciprocal
Dunford-Pettis  property
 of order $ p$ (in short $ X $  has the $ q $-$ (RDPP)_{p} $), if  the adjoint every $ p $-convergent
operator from $ X $ to $ Y $ is weakly $ q $-compact, for every Banach space $ Y. $\
\end{definition}
  The $ \infty $-$ (RDPP)_{\infty} $ is  precisely the $ (RDPP) $ and $ \infty $-$ (RDPP)_{p} $ is  precisely the reciprocal
Dunford-Pettis  property of order $ p $ (in short $ (RDPP)_{p}$) introduced by Ghenciu \cite{g18}.\ Note that the property $ (RDPP)_{p} $ coincides with the property $ (V) $ of order $ p $ introduced
 by  Li et.al.(see Definition at page 443 and Theorem 21 in \cite{g18}  and Theorem 2.4 in \cite{ccl1}).\
 \begin{proposition}\label{p1}
 A Banach space $  X$ has the $ q $-$ (RDPP)_{p} $ if and only if the adjoint  of every evaluation map $ E_{X}:X\rightarrow B(K)$ associated with a subset $ K $ of $ X^{\ast} ,$
 is  weakly $ q $-compact whenever it is $ p $-convergent.\
\end{proposition}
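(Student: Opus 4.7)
The plan is to prove the two implications separately.\ The forward direction is essentially a direct appeal to Definition \ref{d1}, while the converse requires engineering a suitable evaluation map from a general $p$-convergent operator and comparing it with $T^{\ast}$ at the level of unit balls.

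For the forward direction, assuming $X$ has the $q$-$(RDPP)_{p}$, I would simply observe that whenever $E_{X}:X\rightarrow B(K)$ is $p$-convergent, it is a $p$-convergent operator from $X$ into the Banach space $B(K)$, so by Definition \ref{d1} its adjoint $E_{X}^{\ast}$ is weakly $q$-compact.

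For the converse, I would begin with an arbitrary $p$-convergent operator $T:X\rightarrow Y$ and set $K:=T^{\ast}(B_{Y^{\ast}})$, which is a bounded subset of $X^{\ast}$.\ By Lemma \ref{l1}(i) this $K$ is a $p$-$(V)$ set, and by Lemma \ref{l1}(iii) the associated evaluation map $E_{X}:X\rightarrow B(K)$ is $p$-convergent.\ The hypothesis then yields that $E_{X}^{\ast}$ is weakly $q$-compact.\ To transfer this to $T^{\ast}$, I would exploit the fact that for each $y^{\ast}\in B_{Y^{\ast}}$ the point-mass functional $\delta_{T^{\ast}(y^{\ast})}\in B(K)^{\ast}$ defined by $f\mapsto f(T^{\ast}(y^{\ast}))$ has norm at most $1$ and satisfies $E_{X}^{\ast}(\delta_{T^{\ast}(y^{\ast})})=T^{\ast}(y^{\ast})$, so
\[ T^{\ast}(B_{Y^{\ast}}) \subseteq E_{X}^{\ast}(B_{B(K)^{\ast}}). \]
Since subsets of relatively weakly $q$-compact sets are themselves relatively weakly $q$-compact, $T^{\ast}$ is weakly $q$-compact and $X$ has the $q$-$(RDPP)_{p}$.

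The only subtlety — rather than a genuine obstacle — is verifying the norm bound $\Vert\delta_{x^{\ast}}\Vert_{B(K)^{\ast}}\leq 1$ for $x^{\ast}\in K$ and checking the identity $E_{X}^{\ast}(\delta_{x^{\ast}})=x^{\ast}$ through the standard duality $\langle E_{X}^{\ast}(\phi),x\rangle=\langle \phi,E_{X}(x)\rangle$; both are routine computations from the definition of $E_{X}$.\ The conceptual content is that every $p$-$(V)$ subset of $X^{\ast}$ is, up to norm one functionals, the image under $E_{X}^{\ast}$ of point masses in $B(K)^{\ast}$, so any $p$-convergent operator $T$ is controlled at the dual level by the evaluation map $E_{X}$ associated with $K=T^{\ast}(B_{Y^{\ast}})$.
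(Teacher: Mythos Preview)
Your proof is correct. The paper states Proposition~\ref{p1} without proof, but your argument mirrors precisely the evaluation-map technique the paper employs in the proof of the adjacent Theorem~\ref{t2}: the inclusion $K=\lbrace E_{X}^{\ast}(\delta_{x^{\ast}}):x^{\ast}\in K\rbrace\subseteq E_{X}^{\ast}(S)$ and the appeals to Lemma~\ref{l1}(i),(iii) are exactly what appear there.
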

\begin{theorem}\label{t2}  A Banach space $ X $ has the $ q $-$ (RDPP)_{p} $ property if and only if every $ p $-$ (V) $ subset of $ X^{\ast} $ is relatively weakly $ q $-compact.\
\end{theorem}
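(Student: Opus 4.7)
The plan is to prove the equivalence by exploiting Lemma \ref{l1} parts (i) and (iii), which translate between $p$-$(V)$ sets and $p$-convergent operators. Essentially, Lemma \ref{l1}(i) turns a $p$-convergent operator into a $p$-$(V)$ set (its adjoint's image of the unit ball), while Lemma \ref{l1}(iii) turns a $p$-$(V)$ set $K\subseteq X^{\ast}$ into a $p$-convergent evaluation map $E_X:X\to B(K)$. Combined with the fact that the canonical point-mass map $K\hookrightarrow B(K)^{\ast}$ shows $K\subseteq E_X^{\ast}(B_{B(K)^{\ast}})$, both directions should fall out quickly.

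For the ``only if'' direction, I would start with a $p$-$(V)$ subset $K$ of $X^{\ast}$. By Lemma \ref{l1}(iii) the evaluation map $E_X:X\to B(K)$ is $p$-convergent, so the $q$-$(RDPP)_p$ hypothesis gives that $E_X^{\ast}:B(K)^{\ast}\to X^{\ast}$ is weakly $q$-compact. Now for each $x^{\ast}\in K$, the Dirac functional $\delta_{x^{\ast}}\in B(K)^{\ast}$ (defined by $\delta_{x^{\ast}}(f)=f(x^{\ast})$) lies in $B_{B(K)^{\ast}}$, and a direct computation gives $E_X^{\ast}(\delta_{x^{\ast}})=x^{\ast}$. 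Therefore $K\subseteq E_X^{\ast}(B_{B(K)^{\ast}})$, and since the latter is relatively weakly $q$-compact, so is $K$. (Alternatively, one can quote Proposition \ref{p1} directly.)

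For the ``if'' direction, I would take an arbitrary $p$-convergent operator $T:X\to Y$. By Lemma \ref{l1}(i), $T^{\ast}(B_{Y^{\ast}})$ is a $p$-$(V)$ subset of $X^{\ast}$. By hypothesis this set is relatively weakly $q$-compact, which is precisely the assertion that $T^{\ast}:Y^{\ast}\to X^{\ast}$ is weakly $q$-compact. Since $T$ was arbitrary, $X$ has the $q$-$(RDPP)_p$.

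I do not foresee a serious obstacle; the only slightly delicate point is the embedding $K\subseteq E_X^{\ast}(B_{B(K)^{\ast}})$ through point-mass functionals, which is the same trick used in the proof of Lemma \ref{l1}(iv) with $\delta_k$ and mirrors the argument of \cite[Theorem 3.4]{bpo}. Everything else is a direct translation via Lemma \ref{l1}.
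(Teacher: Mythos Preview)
Your proposal is correct and follows essentially the same route as the paper: both directions use Lemma~\ref{l1}(i) and (iii) exactly as you describe, and the ``only if'' direction embeds $K$ into $E_X^{\ast}(B_{B(K)^{\ast}})$ via the point masses $\delta_{x^{\ast}}$ just as you outline. Your write-up is in fact slightly more explicit than the paper's about the identity $E_X^{\ast}(\delta_{x^{\ast}})=x^{\ast}$, but the argument is the same.
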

\begin{proof}
Suppose that $ X $ has the $ q $-$ (RDPP)_{p} $ and let $ K $ be a $ p $-$ (V) $ subset of $ X^{\ast} .$\
Therefore, $ E_{X}$ is $ p$-convergent, so, by the hypothesis, $ E_{X}^{\ast} $ is weakly $ q $-compact.\ Since, $ K=\lbrace E_{X}^{\ast}(\delta_{x^{\ast}}):x^{\ast}\in K\rbrace \subseteq E_{X}^{\ast}(S),$
where $ S $ is the unit ball in $ B(K)^{\ast}, $ it is relatively weakly $ q $-compact.\\
Conversely,  if $ T: X \rightarrow Y $ is a $ p $-convergent operator.\ From part (i) of Lemma \ref{l1}, $ K = T^{\ast}(B_{Y^{\ast}}) $ is a $ p$-$ (V) $ set in $ X^{\ast}. $\ Therefore, $ K $ is relatively weakly $ q $-compact and so $ T^{\ast} $ is weakly $ q $-compact.\
\end{proof}

A bounded linear operator $ T:X\rightarrow Y $ is said to be  strictly singular if there is no infinite dimensional
subspace $ Z\subseteq X $ such that $ T_{\vert Z} $ is an isomorphism onto its range (see \cite{AlbKal}, Definition 2.1.8).\
By {\rm (\cite[Proposition 2.16]{le})}, if $ T: X\rightarrow Y $ is  completely continuous  and $ X\in(RDPP),$  then $ T $ is strictly singular.\
\begin{proposition}\label{p2}
 Suppose that $ T: X\rightarrow Y $ is a $ p$-convergent operator.\ If $ X $ has the $ p $-$ (RDPP)_{p} ,$ then $ T $ is strictly singular.\
\end{proposition}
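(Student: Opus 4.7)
The plan is to adapt Leavelle's proof of the classical case $p=\infty$ (\cite[Proposition 2.16]{le}) recalled just before this proposition. I argue by contradiction: assume $T$ is not strictly singular, so there is an infinite-dimensional closed subspace $Z\subseteq X$ on which $T|_Z\colon Z\to T(Z)$ is an isomorphism. The strategy is to transfer two strong properties to $Z$ --- the $p$-Schur property, and relative weak $p$-compactness of $B_{Z^{\ast}}$ --- and then combine them to force $Z$ to be finite-dimensional, which is the desired contradiction.

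First, $T|_Z$ inherits $p$-convergence from $T$, because any weakly $p$-summable sequence in $Z$ remains weakly $p$-summable in $X$ (functionals on $Z$ extend to $X$ by Hahn--Banach). Hence for a weakly $p$-summable $(z_n)$ in $Z$ one has $T|_Z(z_n)\to 0$ in norm, and the isomorphism yields $\Vert z_n\Vert\leq \Vert(T|_Z)^{-1}\Vert\,\Vert T|_Z(z_n)\Vert\to 0$; this is precisely the $p$-Schur property of $Z$.

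Next, by Lemma \ref{l1}$\,\rm{(i)}$, $T^{\ast}(B_{Y^{\ast}})$ is a $p$-$(V)$ subset of $X^{\ast}$, and Theorem \ref{t2} then gives that it is relatively weakly $p$-compact in $X^{\ast}$. Let $r\colon X^{\ast}\to Z^{\ast}$ be the restriction map, which is surjective by Hahn--Banach. Continuous linear images preserve relative weak $p$-compactness, so $r(T^{\ast}(B_{Y^{\ast}}))=(T|_Z)^{\ast}(B_{Y^{\ast}})$ is relatively weakly $p$-compact in $Z^{\ast}$. Because $T|_Z$ is an isomorphism onto its closed range, extending $z^{\ast}\circ(T|_Z)^{-1}$ via Hahn--Banach shows $(T|_Z)^{\ast}(B_{Y^{\ast}})\supseteq c\,B_{Z^{\ast}}$ for some $c>0$; hence $B_{Z^{\ast}}$ itself is relatively weakly $p$-compact, which in particular forces $Z$ to be reflexive.

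The remaining step is to combine the $p$-Schur property of $Z$ with the relative weak $p$-compactness of $B_{Z^{\ast}}$ to conclude that $Z$ is finite-dimensional. For $p=\infty$ the classical ``reflexive $+$ Schur $=$ finite-dimensional'' closes the argument immediately. For $p<\infty$ this is the main obstacle I expect: one must exploit the sharper relative weak $p$-compactness of $B_{Z^{\ast}}$ (beyond mere weak compactness) through a non-symmetric duality argument --- for example, a Gantmacher-type propagation to $B_Z$ ensuring every weakly null sequence in $Z$ has a weakly $p$-summable subsequence, which by the $p$-Schur property must then be norm null --- in order to upgrade $Z$ to being Schur and thereby invoke the classical fact.
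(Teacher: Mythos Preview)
Your approach is quite different from the paper's. The paper gives a two-line argument: from the hypothesis it concludes $T\in C_p(X,Y)\cap W_p(X,Y)$ and then invokes Corollary~2.23 of \cite{dm}, which asserts that every such operator is strictly singular. Your route --- a direct adaptation of Leavelle's classical argument --- is more self-contained in spirit, and your first two steps (that $Z$ inherits the $p$-Schur property, and that $B_{Z^\ast}$ is relatively weakly $p$-compact) are correctly argued.

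The problem is precisely where you flag it, and your proposed resolution does not go through. There is no ``Gantmacher-type propagation'' for weak $p$-compactness when $p<\infty$: relative weak $p$-compactness of $B_{Z^\ast}$ does \emph{not} entail that weakly null sequences in $Z$ admit weakly $p$-summable subsequences, so the upgrade from the $p$-Schur property to the full Schur property is unsubstantiated. (Reflexivity together with the $p$-Schur property certainly does not force finite-dimensionality --- $\ell_2$ is reflexive and $1$-Schur --- so you genuinely need the sharp $p$-information on the $Z$-side, not on the $Z^\ast$-side.) What actually closes the argument, and what underlies the cited result from \cite{dm}, is weak $p$-compactness of $T$ itself: once $T\in W_p(X,Y)$, the set $T|_Z(B_Z)$ is relatively weakly $p$-compact, hence so is $B_Z$ via the isomorphism $(T|_Z)^{-1}$; combined with the $p$-Schur property of $Z$, every sequence in $B_Z$ then has a weakly $p$-convergent --- and therefore norm-convergent --- subsequence, so $B_Z$ is norm-compact and $Z$ is finite-dimensional. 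Your detour through $B_{Z^\ast}$ lands on the wrong side of the duality, and no known transfer principle brings you back.
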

\begin{proof}  Since $  X$ has the $ p $-$ (RDPP)_{p}, $  $ T \in C_{p}(X,Y)\cap W_{p}(X,Y).$\ Thus, an application of  Corollary 2.23 in \cite{dm} shows that, $  T$ is strictly singular.
\end{proof}

\begin{corollary}\label{c2}
Suppose that $ X$ has the $ q $-$(RDPP)_{p}. $\ The following statements hold:\\
$ \rm{(i)} $ Every quotient space of $ X $ has the same property.\\
$ \rm{(ii)} $ If $ X $ has the $ p$-Schur property, then $ B_{X^{\ast}} $ is a weakly $ q $-compact.\\
$ \rm{(iii)} $ If $X$ has the $(DPP_{q}),$ then every $ p $-$ (V) $ subset of $ X^{\ast} $ is $ q$-$ (V) $ set.\

\end{corollary}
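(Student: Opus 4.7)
The plan is to handle the three parts in turn, each reducing to an application of the $q$-$(RDPP)_{p}$ hypothesis together with Lemma~\ref{l1} or Theorem~\ref{t2}.

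For (i), I let $M\subseteq X$ be a closed subspace and $Q\colon X\to X/M$ the quotient map. Given a $p$-convergent operator $T\colon X/M\to Y$, the composition $TQ\colon X\to Y$ is again $p$-convergent, since $Q$ sends weakly $p$-summable sequences to weakly $p$-summable sequences. The hypothesis gives that $(TQ)^{\ast}=Q^{\ast}T^{\ast}$ is weakly $q$-compact. Since $Q^{\ast}\colon(X/M)^{\ast}\to X^{\ast}$ is an isometric embedding onto $M^{\perp}$ and weak $q$-convergence implies ordinary weak convergence (because $\ell_{q}\subseteq c_{0}$), weak $q$-limits of sequences in the weakly closed subspace $M^{\perp}$ stay in $M^{\perp}$. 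Therefore $(Q^{\ast})^{-1}\colon M^{\perp}\to(X/M)^{\ast}$ is bounded and transports the relative weak $q$-compactness of $Q^{\ast}T^{\ast}(B_{Y^{\ast}})$ back to $T^{\ast}(B_{Y^{\ast}})$.

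Part (ii) is a one-line application of Definition~\ref{d1}: the $p$-Schur property says $id_{X}$ is $p$-convergent, so $id_{X^{\ast}}=(id_{X})^{\ast}$ is weakly $q$-compact, which means $B_{X^{\ast}}$ is relatively weakly $q$-compact. As in (i), weak $q$-convergence implies weak convergence and $B_{X^{\ast}}$ is weakly closed, so the relative weak $q$-compactness upgrades to weak $q$-compactness.

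For (iii), let $K\subseteq X^{\ast}$ be a $p$-$(V)$ set; by Theorem~\ref{t2}, $K$ is relatively weakly $q$-compact. I argue by contradiction: if $K$ were not a $q$-$(V)$ set, there would exist $\varepsilon>0$, a weakly $q$-summable sequence $(x_{n})_{n}$ in $X$, and $x_{n}^{\ast}\in K$ with $|x_{n}^{\ast}(x_{n})|>\varepsilon$. After extracting a weakly $q$-convergent subsequence $x_{n_{k}}^{\ast}\to x^{\ast}$, the differences $(x_{n_{k}}^{\ast}-x^{\ast})_{k}$ are weakly $q$-summable and in particular weakly null. The main obstacle is then to invoke the sequential reformulation of $(DPP_{q})$: for weakly null $(y_{n}^{\ast})$ in $X^{\ast}$ and weakly $q$-summable $(y_{n})$ in $X$, $\langle y_{n}^{\ast},y_{n}\rangle\to 0$. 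This is obtained by realising $(y_{n}^{\ast})$ as $T^{\ast}(e_{n})$ for $T\colon X\to c_{0}$, $Tx=(y_{n}^{\ast}(x))_{n}$, noting that $T^{\ast}(B_{\ell_{1}})$ is weakly compact via Krein's theorem applied to the weakly null set $\{y_{n}^{\ast}\}\cup\{0\}$, and applying $q$-convergence of $T$. Combined with $x^{\ast}(x_{n_{k}})\to 0$ (weakly $q$-summable sequences being weakly null), this forces $x_{n_{k}}^{\ast}(x_{n_{k}})\to 0$, contradicting the lower bound.
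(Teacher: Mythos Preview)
Your proof is correct and follows essentially the same route as the paper for all three parts. The only difference is in (iii): the paper notes that relative weak $q$-compactness implies relative weak compactness and then simply invokes \cite[Theorem~3.11]{ccl} (which gives that, under $(DPP_{q})$, relatively weakly compact subsets of $X^{\ast}$ are $q$-$(V)$ sets), whereas you reprove that cited implication by hand via the operator $T\colon X\to c_{0}$ and the sequential form of $(DPP_{q})$ --- a more self-contained but equivalent argument.
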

\begin{proof}
$ \rm{(i)} $ Suppose that $ X $ has the  $ q $-$(RDPP)_{p} ,$ $ Z$ is a quotient of $ X$ and $ Q : X \rightarrow Z $
is a quotient map.\ Let $ T : Z \rightarrow Y $ be a $ p $-convergent operator.\ Therefore $ T\circ Q : X \rightarrow Y $
is $ p $-convergent, and thus $ (T\circ Q)^{\ast} $ is weakly $ q $-compact.\ Since $ Q^{\ast} $ is an
isomorphism and $ Q^{\ast}(T^{\ast}(B_{Y^{\ast}} )) $ is relatively weakly $ q $-compact, $ T^{\ast}(B_{Y^{\ast}} ) $ is relatively
weakly $ q $-compact.\\
$ \rm{(ii)} $ Since $ X $ has the $ p $-Schur property,  $ B_{X^{\ast}} $ is a $ p $-$ (V) $ set, and thus weakly $ q $-compact, since $ X $ has the $ q$-$ (RDPP)_{p}. $\\
$ \rm{(iii)} $ Suppose that $ K $ is a $ p $-$ (V) $ subset of $ X^{\ast} .$\ Since $ X $ has the $ q $-$(RDPP)_{p}, $ Theorem \ref{t2} implies that $ K $ is a relatively  weakly $ q $-compact.\ Hence $ K $ is a relatively weakly compact.\ Apply {\rm (\cite[Theorem 3.11]{ccl})}.\
\end{proof}
The James $ p $-space $ J_{p}~(1 < p < \infty) $ is the (real) Banach space
of all sequences $ (a_{n})_{n} $ of real numbers such that $ \lim_{n\rightarrow\infty}a_{n}=0 $ and norm on $ J_{p} $ is given by the formula
\begin{center}
$\Vert a_{n}\Vert_{pv}=\sup\lbrace (\displaystyle \sum_{j=1}^{m}\vert a_{i_{j-1}}-a_{i_{j}}\vert^{p})^{\frac{1}{p}}:1\leq i_{0}< i_{1}<\cdot\cdot\cdot<i_{m},m\in\mathbb{N} \rbrace.$
\end{center}
\begin{corollary}\label{c3}
 The James $ 2 $-space $ J_{2}$ does not have the $ 2 $-Schur property.
\end{corollary}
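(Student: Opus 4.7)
The plan is to refute the $2$-Schur property of $J_2$ by exhibiting a weakly $2$-summable sequence in $J_2$ that is not norm null. A natural candidate is the block sequence $x_n := e_{2n} - e_{2n-1}$; I would verify two things: (a) $\|x_n\|_{pv} = \sqrt{6}$ for every $n$, and (b) $(x_n)$ is $\ell_2$-equivalent to the unit vector basis of $\ell_2$, so in particular weakly $2$-summable.

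For (a), applied to the vector whose only nonzero entries are $-1$ at position $2n-1$ and $+1$ at position $2n$, the configuration $i_0=2n-2,\ i_1=2n-1,\ i_2=2n,\ i_3=2n+1$ already gives variation sum $|0-(-1)|^2+|{-1}-1|^2+|1-0|^2=6$, and since the support of $x_n$ contains only two points, a short case analysis rules out any larger value, so $\|x_n\|_{pv}=\sqrt 6$. For (b), let $y=\sum_n a_n x_n$ be a finite linear combination. The upper bound $\|y\|_{pv}\le 2\|y\|_{\ell_2}=2\sqrt{2}\,(\sum a_n^2)^{1/2}$ follows by applying the $\ell_2$-triangle inequality after the splitting $y_{i_{j-1}}-y_{i_j}=y_{i_{j-1}}-0+0-y_{i_j}$ and noting each index appears at most once in the configuration. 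For the lower bound $\|y\|_{pv}\ge 2(\sum a_n^2)^{1/2}$, the consecutive-index configuration $i_k=k+1$ traversing the support already contributes $\sum_n|y_{2n-1}-y_{2n}|^2=\sum_n 4a_n^2$ from the intra-block jumps alone. This two-sided equivalence makes $T\colon\ell_2\to J_2$, $T(e_n)=x_n$, a bounded embedding; dualizing, $T^*(x^*)=(x^*(x_n))_n\in\ell_2$ for every $x^*\in J_2^*$, so $(x_n)$ is weakly $2$-summable in $J_2$. Combined with (a), this means $id_{J_2}$ fails to be $2$-convergent, so $J_2$ does not have the $2$-Schur property.

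A more abstract alternative would apply Corollary \ref{c2}(ii) at $p=q=2$: if $J_2$ had both the $2$-Schur and the $2$-$(RDPP)_2$ properties, then $B_{J_2^*}$ would be weakly $2$-compact, hence weakly compact, making $J_2^*$ and thus $J_2$ reflexive, contradicting the well-known $\dim(J_2^{**}/J_2)=1$. The obstacle in this route is verifying that $J_2$ has the $2$-$(RDPP)_2$, which seems no easier than the direct computation above. In either case the main technical point is the $\ell_2$-equivalence of the block sequence $(x_n)$ in the $p=2$ variation norm, a standard but careful estimate that leverages the specific exponent $p=2$.
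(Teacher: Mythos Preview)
Your primary (direct) argument is correct and takes a genuinely different route from the paper. You exhibit the explicit block sequence $x_n=e_{2n}-e_{2n-1}$, show it is equivalent to the unit vector basis of $\ell_2$ (hence weakly $2$-summable), and bounded away from $0$ in norm, so $id_{J_2}$ is not $2$-convergent. One small correction: $\|x_1\|_{pv}=\sqrt{5}$ rather than $\sqrt{6}$, since the index $2n-2=0$ is unavailable when $n=1$; this is harmless, as all you need is $\|x_n\|\ge 2$, which the single intra-block jump already guarantees.

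The paper instead argues precisely along your ``abstract alternative'', but with $q=\infty$ rather than $q=2$: it asserts (without proof or reference) that $J_2$ has $(RDPP)_2=\infty\text{-}(RDPP)_2$, and then Corollary~\ref{c2}(ii) with $p=2$, $q=\infty$ forces $B_{J_2^{\ast}}$ to be weakly compact, hence $J_2$ reflexive, a contradiction. Thus the paper's proof is one line but leans on an external fact about $J_2$ that it does not justify --- exactly the obstacle you flagged --- whereas your explicit construction is longer but self-contained. Note also that for the abstract route one only needs the weaker $\infty\text{-}(RDPP)_2$, not your $2\text{-}(RDPP)_2$: weak compactness of $B_{J_2^{\ast}}$ (rather than weak $2$-compactness) already yields the contradiction with non-reflexivity.
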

\begin{proof}
Suppose that $ J_{2}$ has the $ 2 $-Schur property.\ Since it has $ (RDPP)_{2},$ by Corollary \ref{c2}, $ B_{J^{\ast}} $ would be weakly compact and then  $ J_{2}$ would be reflexive space which is a contradiction.
\end{proof}

\begin{example}\rm \label{e1}
 $ \rm{(i)} $ $ \ell_{2} $ has the  $ 2$-$ (RDPP)_{1} .$\ Indeed,
we know that $ \ell_{2} $ contains no copy of $ c_{0}. $\ Therefore, $ \ell_{2} $ has the
$ 1 $-Schur property;  {\rm (\cite[Theorem 2.4]{dm})}.\ Hence $ B_{\ell_{2}} $ is a $ 1 $-$ (V) $ set.\ Also by
 {\rm (\cite[Proposition 4]{C1})}, the closed unit ball of $ \ell_{2} $ is a weakly $ 2 $-compact set.\
  Now, let $ K $ be a $ 1 $-$ (V) $ subset of $ \ell_{2}. $\ Since every
$ 1 $-$ (V) $ subset of dual space is bounded,  we may assume that $ K \subseteq \alpha B_{\ell_{2}},$ for some $ \alpha >0. $\ Hence by Theorem \ref{t2},
 $ \ell_{2} $ has the $ 2$-$ (RDPP)_{1} .$\\
  $ \rm{(ii)} $ It is known that $ L_{1}([0,1]) $   contain no copy of $ c_{0} .$\ Therefore $ L_{1}([0,1]) $ has the $ 1 $-Schur property.\ Hence, the part (ii) of Corollary \ref{c2}, implies that  $ L_{1}([0,1]) $ does not have the $ 2$-$ (RDPP)_{1}. $\
 \end{example}
Let us recall from \cite{AlbKal}, that  the finite regular Borel signed measures on the compact space $ K $ is denoted
by $  C(K)^{\ast}=M(K).$\
\begin{corollary}\label{c4}
 If $ K $ is a compact Hausdorff space, then every $ p $-$ (V) $ subset
of $ M(K)$ is a $ p $-$ (V^{\ast}) $ set in $ M(K) .$\
\end{corollary}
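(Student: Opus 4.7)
The plan is to reduce via Theorem~\ref{t1} applied with $X = C(K)$: it suffices to show that whenever $Y$ is a Banach space and $T : C(K) \to Y$ is a $p$-convergent operator, the bidual $T^{**}$ is again $p$-convergent.

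The first step is to argue that any such $T$ must already be weakly compact. I plan to invoke Pelczy\'nski's property $(V)$ of $C(K)$: if $T$ fails to be weakly compact, then $T$ fixes a copy of $c_0$, i.e.\ there is an isomorphic embedding $j : c_0 \hookrightarrow C(K)$ for which $T \circ j$ is an isomorphism onto its range. Since the unit vector basis $(e_n)$ of $c_0$ is weakly $1$-summable (hence weakly $p$-summable), and this property transfers to $\bigl(j(e_n)\bigr)$ in $C(K)$, the sequence $\bigl(T(j(e_n))\bigr)$ would then be weakly $p$-summable yet bounded below in norm, contradicting the $p$-convergence of $T$.

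Once $T$ is known to be weakly compact, Gantmacher's theorem gives that $T^{**} : C(K)^{**} \to Y^{**}$ is also weakly compact. At this point I bring in the classical identification of $C(K)^{**}$ with a $C(K')$ space (via Kakutani's representation of unital abelian $C^{*}$-algebras), from which $C(K)^{**}$ inherits the Dunford-Pettis property. The weakly compact operator $T^{**}$ is then completely continuous, and since every weakly $p$-summable sequence is weakly null, $T^{**}$ is in particular $p$-convergent. An appeal to Theorem~\ref{t1} then closes the argument.

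I expect the main obstacle---or rather, the only non-routine ingredient---to be the first step: combining Pelczy\'nski's property $(V)$ for $C(K)$ with the weak $1$-summability of the $c_0$-basis in order to extract a contradiction from the hypothesis of $p$-convergence. The remainder is a standard application of Gantmacher's theorem together with the Dunford-Pettis property of the $C(K')$-space $C(K)^{**}$.
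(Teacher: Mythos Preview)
Your proposal is correct and follows essentially the same route as the paper: both reduce to Theorem~\ref{t1}, show that a $p$-convergent $T:C(K)\to Y$ is weakly compact, pass to $T^{**}$ via Gantmacher, and then use that $C(K)^{**}$ is again a $C(K')$ space to conclude $T^{**}$ is $p$-convergent. The only cosmetic differences are that the paper phrases the weak compactness step in terms of the $p$-$(V)$ property / $(RDPP)_p$ of $C(K)$ rather than the classical property $(V)$ and $c_0$-fixing, and invokes the $(DPP_p)$ of $C(K')$ directly rather than deducing $p$-convergence from complete continuity.
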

\begin{proof}
We repeat with the obvious changes the proof of Corollary 3.5 in \cite{bpo}.\
Suppose that $ K $ is a compact Hausdorff space, $ Y $ is a Banach
space and $ T : C(K) \rightarrow Y $ is a $ p $-convergent operator.\ Since  $ C(K) $ has the $ p $-$ (V)$ property, it  has the $ (RDPP)_{p} $ by Definition 2.1 in \cite{ccl1}.\ Therefore $ T $ is weakly
compact and so, $ T^{\ast\ast} $ is weakly compact.\ On the other hands, $ M(K)^{\ast} $ is also
a continuous functions space.\ Therefore $ M(K)^{\ast}$ has the $(DPP_{p}) $ and so, $ T^{\ast\ast} $ is $ p $-convergent.\ Hence, Theorem \ref{t1} implies that, every $ p $-$ (V) $ subset of
$ M(K)$ is a $ p $-$ (V^{\ast}) $ set in $ M(K)$
\end{proof}
\begin{proposition}\label{p3}
 The Cartesian product $ X\times Y $ has the $ (RDPP)_{p} $
if and only if $ X $ and $ Y $ have the same property.
\end{proposition}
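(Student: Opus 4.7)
The plan is to reduce everything to the $p$-$(V)$ set characterization of Theorem \ref{t2}: since $(RDPP)_p = \infty$-$(RDPP)_p$, the property says exactly that every $p$-$(V)$ subset of the dual is relatively weakly compact. I also use the standard identification $(X\times Y)^{\ast}\cong X^{\ast}\times Y^{\ast}$ via $\langle (x^{\ast},y^{\ast}),(x,y)\rangle = x^{\ast}(x)+y^{\ast}(y)$, together with the elementary fact that a sequence $((x_n,y_n))_n$ in $X\times Y$ is weakly $p$-summable if and only if $(x_n)_n$ and $(y_n)_n$ are weakly $p$-summable in $X$ and $Y$ respectively (both directions follow by testing against functionals of the form $(x^{\ast},0)$ and $(0,y^{\ast})$ and by using that $\ell_p$ is a vector space).

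For the necessity direction, the coordinate projections $\pi_X:X\times Y\to X$ and $\pi_Y:X\times Y\to Y$ are quotient maps, so Corollary \ref{c2}(i) immediately transfers $(RDPP)_p$ from $X\times Y$ to each factor. (Alternatively, given a $p$-convergent $T:X\to Z$, the composition $T\circ \pi_X:X\times Y\to Z$ is $p$-convergent, its adjoint is weakly compact, and one recovers the weak compactness of $T^{\ast}$ from $(T\circ\pi_X)^{\ast}(z^{\ast})=(T^{\ast}z^{\ast},0)$.)

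For the sufficiency direction, assume $X$ and $Y$ both have $(RDPP)_p$ and let $K\subseteq X^{\ast}\times Y^{\ast}$ be a $p$-$(V)$ set. Let $P_X$ and $P_Y$ denote the coordinate projections on the dual. If $(x_n)_n$ is weakly $p$-summable in $X$, then $((x_n,0))_n$ is weakly $p$-summable in $X\times Y$, and hence
\[
\sup_{x^{\ast}\in P_X(K)} |x^{\ast}(x_n)| \;=\; \sup_{(x^{\ast},y^{\ast})\in K} |x^{\ast}(x_n)+y^{\ast}(0)| \;\longrightarrow\; 0,
\]
so $P_X(K)$ is a $p$-$(V)$ subset of $X^{\ast}$; symmetrically $P_Y(K)$ is a $p$-$(V)$ subset of $Y^{\ast}$. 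By the assumed $(RDPP)_p$ of each factor together with Theorem \ref{t2}, both $P_X(K)$ and $P_Y(K)$ are relatively weakly compact. Since $K\subseteq P_X(K)\times P_Y(K)$, and a product of two relatively weakly compact sets is relatively weakly compact in the product (the weak topology of $X^{\ast}\times Y^{\ast}$ is the product of the two weak topologies), $K$ is relatively weakly compact in $(X\times Y)^{\ast}$. One more application of Theorem \ref{t2} gives that $X\times Y$ has $(RDPP)_p$.

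The argument is essentially bookkeeping once Theorem \ref{t2} and Corollary \ref{c2}(i) are in hand; the only mildly delicate step is the equivalence between weak $p$-summability in $X\times Y$ and coordinatewise weak $p$-summability, which I expect to be the one place where a reader might want a line of justification. No new machinery is required.
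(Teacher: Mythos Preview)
Your proof is correct. It takes a genuinely different route from the paper's argument, though the two are dual in spirit.

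For the necessity, you and the paper do the same thing: pass to quotients. For the sufficiency, the paper works directly with the operator definition: given a $p$-convergent $T:X\times Y\to Z$, it restricts along the inclusions $x\mapsto(x,0)$ and $y\mapsto(0,y)$ to obtain $p$-convergent operators $T_1:X\to Z$ and $T_2:Y\to Z$, uses $(RDPP)_p$ of each factor to get that $T_1$ and $T_2$ are weakly compact, and then concludes (via $T(x,y)=T_1(x)+T_2(y)$ and Eberlein--\v{S}mulian) that $T$ itself is weakly compact. You instead invoke Theorem~\ref{t2} and work on the dual side: a $p$-$(V)$ set $K\subseteq X^{\ast}\times Y^{\ast}$ is projected to $p$-$(V)$ sets in each factor, each of which is relatively weakly compact by hypothesis, and then $K\subseteq P_X(K)\times P_Y(K)$ finishes the job.

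What each approach buys: the paper's argument avoids any appeal to Theorem~\ref{t2} and is entirely self-contained at the level of operators, but it needs the (easy) observation that coordinate restrictions of a $p$-convergent operator remain $p$-convergent and a subsequence/Eberlein--\v{S}mulian step. Your argument is slightly more streamlined once Theorem~\ref{t2} is available, and it makes the product structure of the weak topology do the work explicitly; it also generalizes verbatim to $q$-$(RDPP)_p$ for any $q$, since Theorem~\ref{t2} is stated in that generality and products of relatively weakly $q$-compact sets are relatively weakly $q$-compact by the same diagonal argument.
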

\begin{proof} Since $  X$ and $  Y$ are  quotients of $ X\times Y, $
the necessity of the result follows from {\rm (\cite[Corollary 23]{g18})}.\
Now, suppose that $ X $ and $ Y $ have the $ (RDPP)_{p} .$\ For arbitrary Banach space $ Z, $ let the operator $ T:X\times Y\rightarrow Z$ be $ p $-convergent.\ We show that $ T^{\ast}$ is weakly compact.\ For this purpose,  we define $ T_{1}:X \rightarrow Z $ by $T_{1} (x)=T(x,0)$ and $ T_{2}:Y\rightarrow Z $ by $T_{2} (y)=T(0,y).$\
It is clear that $ T_{1} $ and $ T_{2} $ are $ p $-convergent.\ Since, $ X $ and $ Y $ have the $ (RDPP)_{p},$ by {\rm (\cite[Theorem 21]{g18})} $ T^{\ast}_{1} $ and $ T^{\ast}_{2} $ are weakly compact operators and so $ T_{1} $ and $ T_{2} $ are weakly compact operators.\ If $ (x_{n},y_{n})_{n} $ is a bounded sequence in $ X\times Y $, then $ (x_{n})_{n} $ and $ (y_{n})_{n} $ are bounded sequences in $ X$ and $Y,$ respectively.\  Hence,
 $ (T_{1}(x_{n}), T_{2}(y_{n}))_{n} $ have weakly convergent subsequence in $ Z\times Z $\ Therefore,  $(T (x_{n},y_{n}))_{n} $ has a weakly convergent subsequence in $ Z. $\  Hence, $ T \in W(X\times Y, Z)$ and so, $ T^{\ast} $ is weakly compact.\ Applying {\rm (\cite[Theorem 21]{g18})} implies that $ X\times Y $ has the $ (RDPP)_{p}. $\
\end{proof}

\begin{lemma}\label{l2}
Suppose that $ (x_{n})_{n} $ is a weakly $ p $-summable sequence in $ X $ and  let $ (y_{n})_{n} $ be a bounded sequence in $ Y. $\ If the
adjoint of every bounded linear operator $ T: X\rightarrow Y^{\ast} $ is $ p $-compact, then $ (x_{n}\otimes y_{n})_{n} $ is weakly $ p $-summable
in $ X \widehat{\bigotimes}_{\pi} Y. $
\end{lemma}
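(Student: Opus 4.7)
The plan is to exploit the canonical identification $(X\widehat{\bigotimes}_\pi Y)^{\ast}\cong L(X,Y^{\ast})$, under which $\langle T,x\otimes y\rangle=T(x)(y)$. Weak $p$-summability of $(x_n\otimes y_n)_n$ then reduces to verifying that $\bigl(T(x_n)(y_n)\bigr)_n\in\ell_p$ for every $T\in L(X,Y^{\ast})$. Rewriting $T(x_n)(y_n)=\langle T^{\ast}(\widehat{y_n}),x_n\rangle$, where $\widehat{y_n}\in Y^{\ast\ast}$ is the canonical image of $y_n$, shifts the problem into $X^{\ast}$ where the hypothesis on $T^{\ast}$ can be used.

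The next step is to unpack what $p$-compactness of $T^{\ast}\colon Y^{\ast\ast}\to X^{\ast}$ gives us. By definition, $T^{\ast}(B_{Y^{\ast\ast}})$ is relatively $p$-compact, so there exists $(z_k)_k\in\ell_p(X^{\ast})$ such that
\[
T^{\ast}(B_{Y^{\ast\ast}})\subseteq \Bigl\{\textstyle\sum_k\alpha_kz_k:(\alpha_k)\in B_{\ell_{p^{\ast}}}\Bigr\}.
\]
Setting $M=\sup_n\|y_n\|$, for each $n$ we may write $T^{\ast}(\widehat{y_n})=M\sum_k\alpha_{n,k}z_k$ for some $(\alpha_{n,k})_k\in B_{\ell_{p^{\ast}}}$. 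Hölder's inequality in the $(\ell_{p^{\ast}},\ell_p)$ duality (with the usual convention for $p=1$) then yields
\[
|T(x_n)(y_n)|^p=\Bigl|M\sum_k\alpha_{n,k}\langle z_k,x_n\rangle\Bigr|^p\le M^p\sum_k|\langle z_k,x_n\rangle|^p.
\]

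To finish, sum over $n$ and apply Tonelli's theorem (all terms nonnegative) to interchange the two sums. The weak $p$-summability of $(x_n)_n$ means the map $U\colon X^{\ast}\to\ell_p$, $U(x^{\ast})=(x^{\ast}(x_n))_n$, is well defined and (by a standard closed graph argument) bounded; hence $\sum_n|\langle z_k,x_n\rangle|^p\le\|U\|^p\|z_k\|^p$ for every $k$. Combining,
\[
\sum_n|T(x_n)(y_n)|^p\le M^p\sum_k\sum_n|\langle z_k,x_n\rangle|^p\le M^p\|U\|^p\sum_k\|z_k\|^p<\infty,
\]
since $(z_k)\in\ell_p(X^{\ast})$. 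This is exactly the required weak $p$-summability of $(x_n\otimes y_n)_n$ in $X\widehat{\bigotimes}_\pi Y$.

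The argument is essentially mechanical once the right identifications are in place; the only point demanding some care is the Hölder estimate when $p=1$, in which case $B_{\ell_{p^{\ast}}}$ is interpreted as $B_{c_0}$ and the bound $\bigl|\sum_k\alpha_{n,k}\langle z_k,x_n\rangle\bigr|\le\sum_k|\langle z_k,x_n\rangle|$ still holds since $\|(\alpha_{n,k})_k\|_\infty\le 1$ and $(z_k)\in\ell_1(X^{\ast})$. The main (mild) obstacle is therefore just tracking the constants and justifying the swap of summations; no deeper idea is needed beyond the $p$-compact factorization of $T^{\ast}(\widehat{y_n})$.
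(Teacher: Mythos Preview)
Your argument is correct. Both proofs start from the same identification $(X\widehat{\bigotimes}_\pi Y)^{\ast}\cong L(X,Y^{\ast})$ and reduce the problem to showing $\sum_n|T(x_n)(y_n)|^p<\infty$ for every $T\in L(X,Y^{\ast})$, but they diverge in how they exploit the hypothesis. The paper invokes \cite[Proposition~5.3(c)]{sk} to deduce at once that each such $T$ is $p$-summing; since $(x_n)_n$ is weakly $p$-summable this gives $(T(x_n))_n\in\ell_p(Y^{\ast})$, and the estimate $|T(x_n)(y_n)|\le M\|T(x_n)\|$ finishes the proof in one line. You instead unpack the definition of $p$-compactness of $T^{\ast}$ explicitly, write $T^{\ast}(\widehat{y_n})$ as an $\ell_{p^{\ast}}$-combination of a fixed sequence $(z_k)\in\ell_p(X^{\ast})$, and carry out the H\"older/Tonelli estimate by hand. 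In effect you are re-deriving, for this particular situation, the consequence of the Sinha--Karn result that the paper cites as a black box. The paper's route is shorter and highlights the conceptual link with $p$-summing operators; yours is self-contained and makes all the estimates visible (including the $p=1$ case), at the cost of a few more lines.
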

\begin{proof} Suppose that $ T: X\rightarrow Y^{\ast} $ is a bounded linear operator such that $ T^{\ast} $ is $ p $-compact.\ Hence {\rm (\cite[Proposition 5.3 (c)]{sk})}, implies that every $T\in (X \widehat{\bigotimes}_{\pi} Y)^{\ast}=L(X,Y^{\ast}) $ is $p$-summing operator.\ Now, let\\ $M:=\displaystyle\sup_{n}\lbrace\Vert y_{n}\Vert :n\in \mathbb{N}\rbrace$ then for each $T\in ( X\widehat{\bigotimes}_{\pi}Y)^{\ast}, $ we have:
$$(\displaystyle\sum_{i=1}^{\infty}\vert \langle T, x_{n}\bigotimes y_{n}\rangle\vert^{p})^{\frac{1}{p}}\leq M\Vert T(x_{n})\Vert _{\ell_{p}(Y)}<\infty
$$ Hence, $ (x_{n}\bigotimes y_{n})_{n} $ is a weakly $p$-summable sequence in $X \widehat{\bigotimes}_{\pi}Y.$\end{proof}

Note that, there are examples of Banach
spaces X and Y such that $ X \widehat{\bigotimes}_{\pi} Y $ has Pelczy$\acute{n} $ski's property $ (V) $ of order
$ p .$\ For example, let $  1< q^{\ast}< p <\infty.$\ It is easily verified that, $ L(\ell_{p},\ell_{q^{\ast}})=(\ell_{p}\widehat\bigotimes \ell_{q})^{\ast}$ is reflexive.\ Hence $ \ell_{p}\widehat{\bigotimes } \ell_{q}$ is reflexive, and so has Pelczy$\acute{n} $ski's property $ (V) $ of order
$ p. $\ Thus the spaces $X=\ell_{p} $ and $ Y=\ell_{q} $ are as desired.
\begin{theorem}\label{t3}
If $ X $ and $ Y $ have Pelczy$\acute{n} $ski's property $ (V) $ of order
$ p $ and the
adjoint of every bounded linear operator $ T: X\rightarrow Y^{\ast} $ is $ p $-compact, then $ X \widehat{\bigotimes}_{\pi} Y $ has the same property.
\end{theorem}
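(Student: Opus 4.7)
The plan is to reduce the statement to showing that every $p$-$(V)$ subset of $(X \widehat{\bigotimes}_\pi Y)^\ast$ is relatively weakly compact. By the remark immediately following Definition 2.4, Pelczy\'{n}ski's property $(V)$ of order $p$ for $X \widehat{\bigotimes}_\pi Y$ coincides with $(RDPP)_p$, which by Theorem \ref{t2} (applied with $q=\infty$) is equivalent to the relative weak compactness of every $p$-$(V)$ subset of its dual. Using the identification $(X \widehat{\bigotimes}_\pi Y)^\ast \cong L(X,Y^\ast)$, I would fix an arbitrary $p$-$(V)$ subset $H \subset L(X,Y^\ast)$ and aim to show that $H$ is relatively weakly compact in $L(X,Y^\ast)$.

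First, I would exploit the hypothesis through Lemma \ref{l2}: since the adjoint of every bounded operator $X \to Y^\ast$ is $p$-compact, whenever $(x_n)$ is weakly $p$-summable in $X$ and $(y_n)$ is bounded in $Y$, Lemma \ref{l2} gives that $(x_n \otimes y_n)$ is weakly $p$-summable in $X \widehat{\bigotimes}_\pi Y$. Combined with $H$ being $p$-$(V)$, this produces
$$\sup_{T \in H} |T(x_n)(y_n)| \longrightarrow 0,$$
and, by an approximation-of-suprema argument (choosing $y_n \in B_Y$ to nearly realize the norm), the stronger uniform estimate $\sup_{T \in H} \|T(x_n)\|_{Y^\ast} \to 0$ for every weakly $p$-summable $(x_n) \subset X$.

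Next I would analyze the slices of $H$. For each fixed $x \in X$, the set $H(x) := \{T(x) : T \in H\} \subset Y^\ast$ is itself a $p$-$(V)$ subset of $Y^\ast$: if $(y_n)$ is weakly $p$-summable in $Y$, then $(x \otimes y_n)$ is weakly $p$-summable in $X \widehat{\bigotimes}_\pi Y$ directly from the duality (for any $S \in L(X,Y^\ast)$ one has $\langle S, x \otimes y_n\rangle = (S(x))(y_n)$ and $S(x)\in Y^\ast$), whence $\sup_{T\in H} |T(x)(y_n)|\to 0$. By property $(V)$ of order $p$ for $Y$, $H(x)$ is relatively weakly compact in $Y^\ast$. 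A symmetric argument using property $(V)$ of order $p$ for $X$ shows that $\{T^\ast(y) : T \in H\} \subset X^\ast$ is relatively weakly compact for each $y \in Y$.

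Finally, I would take any sequence $(T_n) \subset H$ and combine these slice-wise compactness facts with the global estimate above to extract a weakly convergent subsequence. A diagonal Eberlein-\v{S}mulian extraction on the relatively weakly compact slices yields a subsequence $(T_{n_k})$ converging pointwise-weakly to a bounded $\phi \in L(X,Y^\ast)$; approximating an arbitrary $z \in X \widehat{\bigotimes}_\pi Y$ by finite-rank tensors then upgrades this to weak-$\ast$ convergence $T_{n_k} \to \phi$ in $(X \widehat{\bigotimes}_\pi Y)^\ast$. The main obstacle I anticipate is upgrading this weak-$\ast$ convergence to weak convergence. For that I would invoke an \"Ulger-type criterion for relative weak compactness of subsets of $L(X,Y^\ast) \cong (X\widehat{\bigotimes}_\pi Y)^\ast$: it suffices to verify that $\sup_{T\in H}|T(x_n)(y_n)| \to 0$ for all weakly null $(x_n) \subset X$ and $(y_n) \subset Y$. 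The hypothesis of $p$-compact adjoints, via Lemma \ref{l2}, is precisely what links this ``weakly null'' condition to the $p$-$(V)$ behavior already isolated, thereby closing the argument.
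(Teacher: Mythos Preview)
Your overall architecture matches the paper's almost exactly: identify $(X\widehat{\bigotimes}_\pi Y)^\ast$ with $L(X,Y^\ast)$, fix a $p$-$(V)$ set $H$ there, use Lemma~\ref{l2} to obtain $\sup_{T\in H}\Vert T(x_n)\Vert\to 0$ for every weakly $p$-summable $(x_n)$, and deduce that the slices $\{T(x):T\in H\}\subset Y^\ast$ and $\{T^\ast(\cdot):T\in H\}\subset X^\ast$ are $p$-$(V)$ sets, hence relatively weakly compact by the assumed property $(V)$ of order $p$ on $Y$ and $X$. The paper does precisely this, with one small extension: it verifies relative weak compactness of $\{T_n^\ast(y^{\ast\ast})\}$ for every $y^{\ast\ast}\in Y^{\ast\ast}$, not merely for $y\in Y$; this costs nothing extra, since the estimate $\vert\langle T_n^\ast(y^{\ast\ast}),x_n\rangle\vert\le \Vert y^{\ast\ast}\Vert\,\Vert T_n(x_n)\Vert$ already handles arbitrary $y^{\ast\ast}$.

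The divergence, and the genuine gap, is in your final step. The paper simply invokes a ready-made criterion (Theorem~4 of \cite{g24}): relative weak compactness of $\{T_n(x):n\}$ for each $x\in X$ together with relative weak compactness of $\{T_n^\ast(y^{\ast\ast}):n\}$ for each $y^{\ast\ast}\in Y^{\ast\ast}$ forces $\{T_n:n\}$ to be relatively weakly compact in $L(X,Y^\ast)$. Your substitute---an ``\"Ulger-type'' condition requiring $\sup_{T\in H}\vert T(x_n)(y_n)\vert\to 0$ for all \emph{weakly null} $(x_n)$ and $(y_n)$---is problematic on two counts. First, as stated it is not a standard criterion for relative weak compactness in $L(X,Y^\ast)$ (\"Ulger's condition is phrased sequencewise, not uniformly over $H$, and involves weakly convergent rather than merely null sequences). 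Second, and more fatally, you only control $\vert T(x_n)(y_n)\vert$ when $(x_n)$ is weakly $p$-summable; Lemma~\ref{l2} and the $p$-$(V)$ property of $H$ say nothing about general weakly null sequences, and the sentence ``the hypothesis of $p$-compact adjoints \ldots\ links this `weakly null' condition to the $p$-$(V)$ behavior'' does not indicate any actual mechanism for that link. The clean fix is exactly what the paper does: extend the adjoint-slice argument to all $y^{\ast\ast}\in Y^{\ast\ast}$ and then quote the weak-compactness criterion from \cite{g24} directly.
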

\begin{proof}
Let $ K $ be a $ p $-$ (V) $ subset of $ (X \widehat{\bigotimes}_{\pi} Y)^{\ast}=L(X,Y^{\ast}) .$\ We claim that $ K $ is relatively weakly compact.\ We show that the conditions $ \rm{(i)} $ and $ \rm{(ii)} $ of {\rm (\cite[Theorem 4 ]{g24})} are true.\
Let $ (T_{n})_{n} $ be a sequence in $ K .$\ If $ y^{\ast\ast}\in Y^{\ast\ast} , $ it is enough to show that $ \lbrace T^{\ast}_{n}(y^{\ast\ast}) : n\in \mathbb{N}\rbrace $ is a $ p $-$(V) $ subset of $ X^{\ast}. $\ For this purpose,
suppose that $ (x_{n})_{n} $ is a weakly $ p $-summable sequence in $ X. $\ For $ n\in \mathbb{N}, $ we have:
$$ \vert\langle T^{\ast}_{n}(y^{\ast\ast}), x_{n} \rangle \vert =\vert \langle y^{\ast\ast}
, T_{n} (x_{n}) \rangle\vert\leq \Vert y^{\ast\ast} \Vert \Vert T_{n} (x_{n}) \Vert.$$\
We claim that $ \Vert T_{n}(x_{n}) \Vert\rightarrow 0. $\ Suppose that
$ \Vert T_{n}(x_{n} )\Vert \not\rightarrow 0. $\ Without loss of
generality we assume that $ \vert T_{n}(x_{n}) ( y_{n} )\vert >\varepsilon $
for some sequence $ (y_{n})_{n} $ in $ B_{Y} $ and
some $ \varepsilon >0. $\ Lemma \ref{l2} implies that $ (x_{n}\otimes y_{n})_{n} $ is a weakly $ p $-summable sequence in $ X \widehat{\bigotimes}_{\pi} Y. $\ Since
$ \lbrace T_{n}: n\in \mathbb{N} \rbrace $ is a $ p $-$ (V) $ set, we have :
\begin{center}
$ \vert \langle T_{n}( x_{n}), y_{n}\rangle \vert=\vert \langle T_{n},x_{n}\otimes y_{n} \rangle \vert\rightarrow 0, $
\end{center}
which is a contradiction.\ Hence $ \lbrace T^{\ast}_{n}(y^{\ast\ast}) : n\in \mathbb{N}\rbrace $ is a $ p $-$(V) $ subset of $ X^{\ast}. $\
Therefore this subset is relatively weakly compact, since $ X $ has the $ p$-$ (V) $ property.\
Now, let $ x \in X. $\ By
an argument similar,  $ \lbrace T_{n}(x):n\in\mathbb{N} \rbrace$ is a $ p $-$(V) $ subset of $ Y^{\ast}, $ and so $ \lbrace T_{n}(x):n\in\mathbb{N} \rbrace$ is relatively weakly compact for all $ x\in X .$\ Hence $ K $ is relatively weakly compact.
\end{proof}

A direct consequence of Theorem \ref{t3} is the following corollary which is the $  p$-version of {\rm (\cite[Theorem 2.7]{e1})}.
\begin{corollary}\label{c5}
Suppose that $ B_{X} $ is  weakly $ p $-precompact  and $ Y $ has the $ (RDPP)_{p}.$\  If the adjoint of every bounded linear operator $ T: X\rightarrow Y^{\ast} $ is $ p $-compact, then
$ X \widehat{\bigotimes}_{\pi} Y $ has the  $ (RDPP)_{p}.$\
\end{corollary}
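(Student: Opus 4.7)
The plan is to reduce the Corollary to Theorem \ref{t3} by leveraging the identification $(RDPP)_p = $ Pelczy\'{n}ski's property $(V)$ of order $p$ noted just after Definition \ref{d1}. Since $Y$ has $(RDPP)_p$, it has property $(V)$ of order $p$. The $p$-compactness hypothesis on adjoints of operators $X \to Y^{\ast}$ is the remaining assumption of Theorem \ref{t3}. So once we verify that $X$ itself has property $(V)$ of order $p$, Theorem \ref{t3} delivers that $X \widehat{\bigotimes}_{\pi} Y$ has property $(V)$ of order $p$, i.e.\ the $(RDPP)_p$.

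Thus the whole task reduces to showing the key lemma: \emph{if $B_X$ is weakly $p$-precompact, then $X$ has Pelczy\'{n}ski's property $(V)$ of order $p$}. Fix a $p$-$(V)$ set $K \subseteq X^{\ast}$; I must show $K$ is relatively weakly compact. By Lemma \ref{l1}(iii), the evaluation map $E_X : X \to B(K)$ is $p$-convergent, so the natural move is to upgrade $E_X$ from $p$-convergent to weakly compact using the weak $p$-precompactness of $B_X$.

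The upgrade proceeds as follows. Take any sequence $(x_n) \subseteq B_X$ and, by hypothesis, extract a weakly $p$-Cauchy subsequence $(x_{n_k})$. I claim $(E_X(x_{n_k}))$ is norm Cauchy in $B(K)$. Otherwise there exist $\varepsilon > 0$ and increasing $(a_j), (b_j)$ with $\|E_X(x_{n_{a_j}} - x_{n_{b_j}})\| > \varepsilon$ for all $j$; but weakly $p$-Cauchy means $(x_{n_{a_j}} - x_{n_{b_j}})_j$ is weakly $p$-summable, so $p$-convergence of $E_X$ forces the norms to tend to $0$, a contradiction. Therefore $(E_X(x_{n_k}))$ is norm convergent, $E_X(B_X)$ is relatively norm-compact, and in particular $E_X$ is weakly compact. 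The adjoint $E_X^{\ast} : B(K)^{\ast} \to X^{\ast}$ is then weakly compact, and since $\delta_{x^{\ast}} \in B_{B(K)^{\ast}}$ with $E_X^{\ast}(\delta_{x^{\ast}}) = x^{\ast}$ for each $x^{\ast} \in K$, we obtain $K \subseteq E_X^{\ast}(B_{B(K)^{\ast}})$, which is relatively weakly compact. Hence $X$ has the property $(V)$ of order $p$.

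Applying Theorem \ref{t3} concludes the proof. The delicate step is the norm-Cauchy claim for $(E_X(x_{n_k}))$: it hinges on interpreting the definition of weakly $p$-Cauchy correctly (differences along \emph{any} pair of increasing index sequences must be weakly $p$-summable), because this is exactly what lets a single $p$-convergent operator promote a weakly $p$-Cauchy sequence all the way to a norm Cauchy image. Everything else is a routine invocation of Lemma \ref{l1} and the evaluation-map technique already present in the paper.
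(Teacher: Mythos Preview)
Your proposal is correct and follows essentially the same route as the paper: reduce to Theorem~\ref{t3} after verifying that weak $p$-precompactness of $B_X$ forces $X$ to have Pelczy\'nski's property $(V)$ of order $p$ (equivalently the $(RDPP)_p$). The paper treats this last implication as known---it appears immediately afterwards as Proposition~\ref{p4}, deduced from \cite[Theorem 2.6]{ccl}---whereas you supply a self-contained argument via Lemma~\ref{l1}(iii) and the evaluation map, which in fact yields the slightly stronger conclusion that $E_X$ (hence $K$) is relatively \emph{norm} compact, matching Proposition~\ref{p4}(i).
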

As an immediate consequence of the Theorem 2.6  in \cite{ccl}, we can conclude that
the following result.
\begin{proposition} \label{p4}
If $ B_{X} $ is  weakly $ p $-precompact, then the following statements holds:\\
$ \rm{(i)} $ Every $ p$-$ (V) $ subset of $ X^{\ast} $ is relatively compact.\\
$ \rm{(ii)} $ $  X $ has the $ (RDPP) _{p}.$\
\end{proposition}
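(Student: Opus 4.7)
The plan is to reduce both parts to the single fact that, under the weak $p$-precompactness of $B_{X}$, every $p$-convergent operator out of $X$ is compact. This hinges on Theorem 2.6 of \cite{ccl} which, in this context, essentially records that a $p$-convergent operator sends weakly $p$-Cauchy sequences to norm Cauchy sequences: if $T:X\to Z$ is $p$-convergent and $(x_{n})_{n}$ is weakly $p$-Cauchy but $(T(x_{n}))_{n}$ fails to be Cauchy, one can extract subsequences $(m_{k})_{k},(n_{k})_{k}$ with $\Vert T(x_{m_{k}})-T(x_{n_{k}})\Vert>\varepsilon$, while $(x_{m_{k}}-x_{n_{k}})_{k}$ is weakly $p$-summable, producing a contradiction against $p$-convergence. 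Hence, when $B_{X}$ is weakly $p$-precompact, every bounded sequence in $X$ has a weakly $p$-Cauchy subsequence, whose $T$-image is Cauchy, so $T$ is a compact operator.

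First, I would establish $\rm{(i)}$. Let $K$ be a $p$-$(V)$ subset of $X^{\ast}$. By part $\rm{(iii)}$ of Lemma \ref{l1}, the evaluation map $E_{X}:X\to B(K)$ is $p$-convergent. Applying the principle above, $E_{X}$ is compact, and therefore by Schauder's theorem $E_{X}^{\ast}:B(K)^{\ast}\to X^{\ast}$ is compact as well. Denoting by $S$ the closed unit ball of $B(K)^{\ast}$ and using the point masses $\delta_{x^{\ast}}\in S$, we get $K=\{E_{X}^{\ast}(\delta_{x^{\ast}}):x^{\ast}\in K\}\subseteq E_{X}^{\ast}(S)$, and the right-hand side is relatively compact in $X^{\ast}$; hence so is $K$.

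For $\rm{(ii)}$, I would simply observe that $\rm{(i)}$ combined with Theorem \ref{t2} (taken with $q=\infty$) yields the claim: every $p$-$(V)$ subset of $X^{\ast}$ is relatively compact by $\rm{(i)}$, hence in particular relatively weakly compact, so that $X$ has the $\infty$-$(RDPP)_{p}=(RDPP)_{p}$.

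The only real obstacle is the passage from $p$-convergence plus weak $p$-precompactness of $B_{X}$ to compactness of $T$, which is not proved in the excerpt but is imported from \cite[Theorem 2.6]{ccl} as stated by the author; everything else is a routine application of earlier items (Lemma \ref{l1}$\rm{(iii)}$, Schauder's theorem, and Theorem \ref{t2}).
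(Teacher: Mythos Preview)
Your proof is correct and follows essentially the same route as the paper, which simply records the proposition as ``an immediate consequence of Theorem 2.6 in \cite{ccl}'' without further argument. You have spelled out precisely how that consequence unfolds: the cited result yields that $p$-convergent operators map weakly $p$-Cauchy sequences to norm Cauchy sequences, whence weak $p$-precompactness of $B_{X}$ forces every $p$-convergent operator on $X$ to be compact; you then finish via the evaluation map and Schauder (Lemma \ref{l1}(iii), Theorem \ref{t2}), which is exactly the machinery the paper would invoke.
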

\begin{definition} \label{d2} Let $ 1\leq p\leq q \leq \infty.$\ We say that $ X $ has the $ q $-reciprocal
Dunford-Pettis$ ^{\ast} $ property of order $ p$ (in short $ X $ has the $ q$-$ (RDP^{\ast}P)_{p} $), if for each Banach space $ Y,$ every bounded linear operator $T : Y \rightarrow X $ is  weakly $ q $-compact, whenever $ T^{\ast}: X^{\ast}\rightarrow Y^{\ast}$  is $ p $-convergent.
\end{definition}
The $ \infty $-$ (RDP^{\ast}P)_{\infty} $ is  precisely the $ (RDP^{\ast}P) $ and $ \infty $-$ (RDP^{\ast}P)_{p} $ is  precisely the  $ (RDP^{\ast}P)_{p} $ introduced by Ghenciu
(see Definition at page 444 and Theorem 15 of \cite{g18}).\ Note that $ (RDP^{\ast}P)_{p} $ coincide with the property $ (V^{\ast}) $ of order $ p $ in \cite{ccl1}.\

\begin{theorem}\label{t4} A Banach space $ X $ has the $ q $-$ (RDP^{\ast}P)_{p}$ if and only if
every $ p $-$ (V^{\ast}) $ subset of $ X $ is relatively weakly $ q $-compact.\
\end{theorem}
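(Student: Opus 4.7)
The plan is to mirror the structure of Theorem \ref{t2}, replacing $p$-$(V)$ sets in $X^{\ast}$ by $p$-$(V^{\ast})$ sets in $X$, and using parts (ii) and (v) of Lemma \ref{l1} in place of parts (i) and (iii). Both directions reduce to applying an appropriate characterization of $p$-$(V^{\ast})$ sets via operators whose adjoints are $p$-convergent.

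For the forward direction, I would start with $K \subseteq X$ a $p$-$(V^{\ast})$ set. The key is Lemma \ref{l1}(v), which produces a Banach space $Y$ and an operator $T:Y \to X$ such that both $T$ and $T^{\ast}$ are $p$-convergent and $K \subseteq T(B_Y)$. Since $T^{\ast}$ is $p$-convergent and $X$ has the $q$-$(RDP^{\ast}P)_p$, the definition forces $T$ to be weakly $q$-compact, so $T(B_Y)$ is relatively weakly $q$-compact. Then $K$, being a subset, is relatively weakly $q$-compact as well.

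For the converse, suppose every $p$-$(V^{\ast})$ subset of $X$ is relatively weakly $q$-compact, and let $T:Y \to X$ be a bounded linear operator whose adjoint $T^{\ast}:X^{\ast}\to Y^{\ast}$ is $p$-convergent. By Lemma \ref{l1}(ii) (applied with the roles of the source and target adjusted so that the operator lands in $X$), $T(B_Y)$ is a $p$-$(V^{\ast})$ subset of $X$. Hypothesis then gives that $T(B_Y)$ is relatively weakly $q$-compact, which is precisely the statement that $T$ is weakly $q$-compact. Hence $X$ has the $q$-$(RDP^{\ast}P)_p$.

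There is no real obstacle here: once Lemma \ref{l1} is in place the proof is essentially a dictionary translation between the operator-theoretic definition of $q$-$(RDP^{\ast}P)_p$ and the set-theoretic notion of $p$-$(V^{\ast})$ sets. The only point that deserves a sentence of care is the application of Lemma \ref{l1}(ii), since that lemma is stated for an operator $T \in L(X,Y)$ with $T(B_X)$ being $p$-$(V^{\ast})$ in $Y$ iff $T^{\ast}$ is $p$-convergent; I simply relabel the spaces so that the operator has codomain $X$, which is a purely notational change.
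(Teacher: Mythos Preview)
Your proof is correct and follows essentially the same approach as the paper. The only difference is cosmetic: for the forward direction you invoke Lemma~\ref{l1}(v) to obtain an operator $T:Y\to X$ with $T^{\ast}$ $p$-convergent and $K\subseteq T(B_Y)$, whereas the paper builds this operator by hand, taking a sequence $(x_n)$ in $K$ and defining $T:\ell_1\to X$ by $T(b)=\sum_i b_i x_i$, then checking directly that $T^{\ast}$ is $p$-convergent. Your route is slightly cleaner since the $\ell_1(K)$ construction is already packaged in Lemma~\ref{l1}(v); the converse direction via Lemma~\ref{l1}(ii) is identical in both.
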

\begin{proof}
We adapt the proof of {\rm (\cite[Theorem 15]{g18})}.\
 Let $ T : Y\rightarrow X $ be a bounded linear operator such that $ T^{\ast}: X^{\ast} \rightarrow Y ^{\ast} $ is $ p $-convergent.\ From part (ii) of Lemma \ref{l1},
 $ T (B_{Y} ) $ is a $ p $-$ (V^{\ast}) $ set and so  $ T (B_{Y} ) $ is relatively weakly $ q$-compact.\ Hence, $ T $ is weakly $ q $-compact.\\
Conversely, let $ K$ be a $ p $-$ (V^{\ast}) $ subset of $ X$ and let $ (x_{n})_{n} $ be a
sequence in $ K. $\ Let
 $ T :\ell_{1} \rightarrow X $ be defined by $T(b) =\displaystyle\sum_{i} b_{i}x_{i}.$\ It is clear  that $ T^{\ast}:X^{\ast}\rightarrow\ell_{\infty},~ T^{\ast}(x^{\ast}) =(x^{\ast}(x_{n}))_{n}.$\
Suppose $ (x^{\ast}_{n})_{n} $
is a weakly $ p$-summable sequence in $ X^{\ast}. $\ Since $ K$
is a $ p $-$ (V^{\ast}) $ set, $ \Vert T^{\ast}(x^{\ast}_{n})\Vert=\displaystyle\sup_{i}\vert x_{n} ^{\ast}(x_{i})\vert\rightarrow 0.$\
Therefore $ T^{\ast} $ is $ p $-convergent and thus $ T $ is weakly $ q $-compact.\ Let $ (e^{1}_{n})_{n} $
be the unit
basis of $ \ell_{1}. $\ Then $ (x_{n})_{n}=(T(e^{1}_{n}))_{n}$
has a weakly $ q $-convergent subsequence.\
\end{proof}

\begin{corollary}\label{c6}
The following statements hold:\\
$ \rm{(i)} $ Suppose that $ Y$ is a closed subspace of $ X^{\ast} $ and $ X $ has the $ q $-$ (RDPP)_{p} .$\ Then $ Y $ has the
$ q $-$ (RDP^{\ast}P)_{p} .$\\
$ \rm{(ii)} $ If $ Y^{\ast} $ has the $ q $-$ (RDPP)_{p} ,$ then $ Y $ has the $ q $-$ (RDP^{\ast}P)_{p} .$\\
$ \rm{(iii)} $ Every $ L_{1}(\mu) $ space has the $ (RDP^{\ast}P)_{p}. $\
\end{corollary}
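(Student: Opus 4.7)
For part (i), I would apply Theorem~\ref{t4}: it suffices to show that every $p$-$(V^{\ast})$ subset $K$ of $Y$ is relatively weakly $q$-compact in $Y$. The key step is to observe that, regarded as a subset of $X^{\ast}$, $K$ is actually a $p$-$(V)$ subset of $X^{\ast}$. Indeed, if $(x_n)_n$ is a weakly $p$-summable sequence in $X$, the restriction map $J:X\to Y^{\ast}$ given by $J(x)(y):=y(x)$ is bounded linear, so $(J(x_n))_n$ is weakly $p$-summable in $Y^{\ast}$; since $K$ is $p$-$(V^{\ast})$ in $Y$,
\[
\sup_{y\in K}|y(x_n)|\;=\;\sup_{y\in K}|J(x_n)(y)|\;\longrightarrow\;0,
\]
which is precisely the $p$-$(V)$ condition for $K\subseteq X^{\ast}$. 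Because $X$ has the $q$-$(RDPP)_p$, Theorem~\ref{t2} then gives that $K$ is relatively weakly $q$-compact in $X^{\ast}$.

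To transfer this back into $Y$, I would use two standard facts: $Y$ is norm-closed and convex in $X^{\ast}$, hence weakly closed, so weak limits of sequences in $Y$ stay in $Y$; and by Hahn-Banach each $y^{\ast}\in Y^{\ast}$ extends to some element of $X^{\ast\ast}$, so weak $q$-summability of a sequence of differences from $Y$ tested in $X^{\ast}$ automatically implies weak $q$-summability in $Y$. Combining these two observations, $K$ is relatively weakly $q$-compact in $Y$, which proves (i).

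Part (ii) is an immediate specialization: apply (i) with $X:=Y^{\ast}$, so that the canonical embedding $Y\hookrightarrow Y^{\ast\ast}=X^{\ast}$ exhibits $Y$ as a closed subspace of $X^{\ast}$, and the hypothesis is exactly what (i) requires. For (iii), I would invoke the classical Pelczy\'nski result that every $L_1(\mu)$-space has property $(V^{\ast})$, i.e.\ the $1$-$(V^{\ast})$ property. Since $\ell_1\subseteq\ell_p$, every weakly $1$-summable sequence in $L_1(\mu)^{\ast}$ is weakly $p$-summable, so every $p$-$(V^{\ast})$ subset of $L_1(\mu)$ is already a $1$-$(V^{\ast})$ subset, hence relatively weakly compact; Theorem~\ref{t4} with $q=\infty$ then yields $L_1(\mu)\in (RDP^{\ast}P)_p$. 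The only real subtlety lies in (i), where one must keep the two dualities straight (the elements of $K$ lie in $Y\subseteq X^{\ast}$ and so they act on $X$, while the $p$-$(V^{\ast})$ test uses functionals in $Y^{\ast}$) and verify that relative weak $q$-compactness does descend from $X^{\ast}$ to the closed subspace $Y$; once (i) is in hand, (ii) is immediate and (iii) reduces to standard facts.
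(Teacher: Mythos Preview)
Your proof of (i) and (ii) is correct and essentially the same as the paper's: the paper phrases step~2 as ``$K$ is a $p$-$(V^{\ast})$ subset of $X^{\ast}$ and thus a $p$-$(V)$ subset of $X^{\ast}$,'' invoking the general fact that $p$-$(V^{\ast})$ subsets of dual spaces are $p$-$(V)$ sets, whereas you verify the $p$-$(V)$ condition directly via the restriction map $J:X\to Y^{\ast}$ --- these are the same argument unpacked differently. Your careful discussion of passing relative weak $q$-compactness back down to the closed subspace $Y$ is more explicit than the paper, which leaves this implicit.

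For (iii) you take a genuinely different route. The paper argues structurally through (ii): since $L_{1}(\mu)^{\ast}=L_{\infty}(\mu)\cong C(K)$ for some compact Hausdorff $K$, and $C(K)$-spaces have the $(RDPP)_{p}$, part (ii) applies directly. Your argument instead bypasses (i)--(ii) entirely and uses Pelczy\'nski's classical theorem that $L_{1}(\mu)$ has property $(V^{\ast})$ (i.e.\ the $1$-$(V^{\ast})$ property), together with the monotonicity observation that every $p$-$(V^{\ast})$ set is a $1$-$(V^{\ast})$ set. Both are valid; the paper's approach has the advantage of illustrating the corollary's own machinery and only needs the $(RDPP)_{p}$ for $C(K)$ (already established in the paper via Corollary~\ref{c4}/\cite{ccl1}), while yours is self-contained given Pelczy\'nski's result but imports a theorem external to the paper's development.
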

\begin{proof}
$ \rm{(i)} $ Let $ K $ be a $ p $-$ (V^{\ast}) $ subset of $ Y. $\ Then $ K$ is a $p $-$ (V^{\ast}) $ subset
of $ X^{\ast}$ and thus a $ p $-$ (V) $ subset of $ X^{\ast}.$\
Hence, $ K$ is relatively weakly $ q $-compact.\ Therefore, $ Y $ has the $ q $-$ (RDP^{\ast}P)_{p} .$\\
$ \rm{(ii)} $ Consider $ Y $ a closed subspace of $ Y^{\ast\ast} $ and apply $ \rm{(i)} .$\\
$ \rm{(iii)} $ Let $ (\Omega,\Sigma,\mu) $
be any $ \sigma $-finite measure space.\ It is well known
that $ L^{\ast}_{1}(\mu)=L_{\infty}(\mu) $ is isometrically isomorphic to the algebra $ C(K) $ for some compact Hausdorff space $  K $ {\rm (\cite[Theorem 4.2.5]{AlbKal})}.\ Since $ C(K) $ spaces has  the $ (RDPP)_{p},$ we apply $ \rm{(i)}. $
\end{proof}

\begin{lemma}\label{l3}\cite{RP}
Let $ Y$ be a separable subspace of $ X. $\ Then there is a separable subspace $ Z$ of $ X $ that
contains $ Y $ and an isometric embedding $ J:Z^{\ast} \rightarrow X^{\ast}$
such that
$\langle J(z^{\ast}), z\rangle=\langle z^{\ast}, z \rangle $
for each $ z \in Z $
and $ z^{\ast}\in Z^{\ast} .$\
\end{lemma}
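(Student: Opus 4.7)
The plan is to construct $Z$ and $J$ by a countable iterative enlargement of $Y$, exploiting the weak*-metrizability of the dual unit ball of any separable subspace of $X$. Starting with $Z_0 := Y$, I would inductively build a chain of separable subspaces $Z_0 \subseteq Z_1 \subseteq \cdots$ of $X$ together with an increasing chain of countable $\mathbb{Q}$-linear subspaces $\Phi_0 \subseteq \Phi_1 \subseteq \cdots$ of $X^*$.

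At stage $n$, I would fix a countable weak*-dense subset of $B_{Z_n^*}$ (which exists since $Z_n$ is separable), pick norm-preserving Hahn-Banach extensions of its members to $X^*$, and close under $\mathbb{Q}$-linear operations to obtain $\Phi_n$. Then I would enlarge $Z_n$ to a separable $Z_{n+1} \supseteq Z_n$ by adjoining, for every $\varphi \in \Phi_n$ and every $k \in \mathbb{N}$, a vector $x_{\varphi,k} \in B_X$ with $|\varphi(x_{\varphi,k})| > \|\varphi\|_{X^*} - \tfrac{1}{k}$. Setting $Z := \overline{\bigcup_n Z_n}$, which is separable, every $\varphi \in \Phi := \bigcup_n \Phi_n$ then satisfies $\|\varphi|_Z\|_{Z^*} = \|\varphi\|_{X^*}$ by the norming-vector step. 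Consequently, the restriction $\varphi \mapsto \varphi|_Z$ sends $\Phi$ isometrically and $\mathbb{Q}$-linearly onto a weak*-dense $\mathbb{Q}$-linear subspace of $Z^*$. The inverse of this map, call it $J_0$, is $\mathbb{Q}$-linear and isometric into $X^*$; extending it by weak*-continuity on bounded sets using weak*-compactness of $B_{X^*}$ would produce the sought linear isometric embedding $J : Z^* \to X^*$ satisfying $\langle J(z^*), z \rangle = \langle z^*, z \rangle$ for all $z \in Z$.

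The main obstacle is verifying that the weak*-continuous extension is well-defined, \emph{linear}, and \emph{isometric} on all of $Z^*$, not merely on the weak*-dense $\mathbb{Q}$-linear subspace where it is defined directly. Given $z^* \in Z^*$, one approximates it weak* by a net drawn from $\Phi|_Z$; the corresponding lifted net sits in a fixed ball of $X^*$ and admits weak*-cluster points, and one must show that all such cluster points coincide. The rigidity supplied by the iterated norming-vector enrichment is what makes this work: each cluster point has $X^*$-norm equal to the $Z^*$-norm of $z^*$ and is forced to agree on a sufficiently rich collection of vectors that the construction has deposited into $Z$. The delicate point — which is where the iterative bookkeeping earns its keep — is propagating this agreement from the norming vectors to all of $X$ so that the cluster point is truly unique, after which real linearity of $J$ follows from the $\mathbb{Q}$-linearity of $J_0$ by continuity.
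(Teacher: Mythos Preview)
The paper does not prove this lemma at all; it is simply quoted from Heinrich--Mankiewicz \cite{RP} and used as a black box. So there is no ``paper's proof'' to compare against, and your proposal must be judged on its own.

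Your iterative construction of $Z$ is standard and correct: the back-and-forth between enlarging the subspace and adjoining norming vectors does produce a separable $Z\supseteq Y$ and a countable $\mathbb{Q}$-linear $\Phi\subseteq X^{\ast}$ on which restriction to $Z$ is isometric, with $\Phi|_{Z}\cap B_{Z^{\ast}}$ weak$^{\ast}$-dense in $B_{Z^{\ast}}$.

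The genuine gap is exactly where you flag it, and the mechanism you invoke does not close it. Given $z^{\ast}\in Z^{\ast}$ and two bounded sequences $(\varphi_{n}),(\varphi_{n}')\subseteq\Phi$ with $\varphi_{n}|_{Z},\varphi_{n}'|_{Z}\to z^{\ast}$ weak$^{\ast}$, any weak$^{\ast}$-cluster points $\psi,\psi'$ in $X^{\ast}$ automatically satisfy $\psi|_{Z}=\psi'|_{Z}=z^{\ast}$. Your norming vectors live in $Z$, so they only witness agreement on $Z$; they give you no leverage whatsoever on $X\setminus Z$. The isometry of restriction on $\Phi$ tells you $\|(\varphi_{n}-\varphi_{n}')|_{Z}\|=\|\varphi_{n}-\varphi_{n}'\|$, but $(\varphi_{n}-\varphi_{n}')|_{Z}\to 0$ only weak$^{\ast}$, not in norm, so you cannot conclude $\varphi_{n}-\varphi_{n}'\to 0$ in any topology that would force $\psi=\psi'$. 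Fixing an ultrafilter to make the cluster point canonical does not help with \emph{linearity}: $J(z_{1}^{\ast}+z_{2}^{\ast})$ and $J(z_{1}^{\ast})+J(z_{2}^{\ast})$ would come from different approximating sequences, and nothing identifies them. In short, weak$^{\ast}$-density is too coarse to transport an isometric $\mathbb{Q}$-linear section to a genuine linear isometry; you would need norm-density of $\Phi|_{Z}$ in $Z^{\ast}$, which your construction does not (and in general cannot) provide, since $Z^{\ast}$ need not be separable.

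The proofs in the literature (Heinrich--Mankiewicz, and the closely related arguments of Sims--Yost and Fakhoury) avoid this obstacle by a different route: one shows that $Z$ can be built to be \emph{locally complemented} (equivalently, an ideal) in $X$, typically via the principle of local reflexivity or an ultrapower argument, which directly yields a norm-one linear projection on $X^{\ast}$ with kernel $Z^{\perp}$; the map $J$ is then the inverse of the restriction on the range of that projection. The linearity is built in from the start rather than recovered by a limiting procedure.
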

\begin{theorem}\label{t5}
$ \rm{(i)} $ If $ X $  has the $ (RDP^{\ast}P)_{p},$  then it has the $ 1 $-Schur property. \\
 $ \rm{(ii)} $ A Banach space $ X$ has the $ (RDP^{\ast}P)_{p} $ if and only if any
closed separable subspace of $ X $ has the same property.
\end{theorem}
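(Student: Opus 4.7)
The plan is to prove the two parts separately.

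For (i), I would exploit the standard correspondence between weakly $1$-summable sequences in $X$ and bounded operators from $c_0$. Given a weakly $1$-summable $(x_n) \subseteq X$, define $T : c_0 \to X$ by $T(e_n) = x_n$; weak $1$-summability is precisely the condition that this extends to a bounded operator. Its adjoint $T^{\ast} : X^{\ast} \to \ell_1$ sends weakly null sequences in $X^{\ast}$ to weakly null sequences in $\ell_1$, which by the Schur property of $\ell_1$ are norm null. Hence $T^{\ast}$ is completely continuous, and in particular $p$-convergent, so Definition \ref{d2} forces $T$ to be weakly compact. Using the Schur property of $\ell_1$ once more, $T^{\ast}(B_{X^{\ast}})$ is relatively weakly compact in $\ell_1$ and therefore relatively norm compact, so $T$ is compact by Schauder's theorem. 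Since $e_n \to 0$ weakly in $c_0$, complete continuity of $T$ gives $x_n = T(e_n) \to 0$ in norm, which is the $1$-Schur property.

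For the forward direction of (ii), let $Z \subseteq X$ be closed and separable and let $K \subseteq Z$ be a $p$-$(V^{\ast})$ set. I would verify that $K$ is also $p$-$(V^{\ast})$ in $X$: the restriction to $Z$ of any weakly $p$-summable sequence $(x_n^{\ast})$ in $X^{\ast}$ is weakly $p$-summable in $Z^{\ast}$, because for each $z^{\ast\ast} \in Z^{\ast\ast}$ the canonical embedding $Z^{\ast\ast} \hookrightarrow X^{\ast\ast}$ (the second adjoint of the inclusion) preserves the relevant $\ell_p$-pairing. Then the $p$-$(V^{\ast})$ hypothesis on $Z$ gives $\sup_{k \in K} |x_n^{\ast}(k)| \to 0$. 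Theorem \ref{t4} applied in $X$ makes $K$ relatively weakly compact in $X$; because $Z$ is weakly closed, this transfers to $Z$, and a second application of Theorem \ref{t4} yields $(RDP^{\ast}P)_p$ for $Z$.

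The converse of (ii), the main technical obstacle, is where Lemma \ref{l3} enters. Suppose for contradiction $X$ does not have $(RDP^{\ast}P)_p$. Theorem \ref{t4} then supplies a $p$-$(V^{\ast})$ set $K \subseteq X$ together with a sequence $(x_n) \subseteq K$ that has no weakly convergent subsequence in $X$. Set $Y = \overline{\mathrm{span}}\{x_n\}$ and use Lemma \ref{l3} to obtain a separable subspace $Z$ of $X$ with $Y \subseteq Z$ and an isometric embedding $J : Z^{\ast} \to X^{\ast}$ satisfying $J(z^{\ast})|_Z = z^{\ast}$. The key step is to transfer the $p$-$(V^{\ast})$ property: for any weakly $p$-summable $(z_n^{\ast})$ in $Z^{\ast}$, the sequence $(J(z_n^{\ast}))$ is weakly $p$-summable in $X^{\ast}$, since for each $x^{\ast\ast} \in X^{\ast\ast}$ one has $x^{\ast\ast}(J(z_n^{\ast})) = (J^{\ast} x^{\ast\ast})(z_n^{\ast})$ with $J^{\ast} x^{\ast\ast} \in Z^{\ast\ast}$. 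Because $J$ preserves values on $Z$, the $p$-$(V^{\ast})$ property of $K$ gives $\sup_i |z_n^{\ast}(x_i)| \to 0$. Hence $\{x_n\}$ is $p$-$(V^{\ast})$ in $Z$; since $Z$ has $(RDP^{\ast}P)_p$ by hypothesis, Theorem \ref{t4} furnishes a weakly convergent subsequence of $(x_n)$ in $Z$, which is automatically weakly convergent in $X$ (by Hahn--Banach), contradicting the choice of $(x_n)$.

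The principal difficulty is the transfer step in the converse of (ii): arbitrary Hahn--Banach extensions of the $z_n^{\ast}$ would not in general form a weakly $p$-summable sequence in $X^{\ast}$, so one cannot directly invoke the $p$-$(V^{\ast})$ property of $K \subseteq X$. The canonical linear isometric embedding $J$ supplied by Lemma \ref{l3} is exactly what makes the weak $p$-summability survive the passage $Z^{\ast} \to X^{\ast}$ and hence allows the contradiction to close.
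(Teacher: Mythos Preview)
Your proof is correct. For part (ii), your argument is essentially identical to the paper's: both directions proceed exactly as you describe, with Lemma \ref{l3} supplying the isometric embedding $J: Z^{\ast} \to X^{\ast}$ needed to transfer weak $p$-summability in the converse direction; the paper phrases that direction as a contrapositive (start with a non--relatively-weakly-compact set and show it is not $p$-$(V^{\ast})$) rather than a contradiction, but the logic is the same.

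For part (i), your route genuinely differs from the paper's. The paper argues that $c_0$ cannot embed in $X$ --- the sequence $e_1 + \cdots + e_n$ in $c_0$ is a $p$-$(V^{\ast})$ set that is not relatively weakly compact, so $c_0$ fails $(RDP^{\ast}P)_p$ --- and then invokes the known equivalence between ``contains no copy of $c_0$'' and the $1$-Schur property (Theorem 2.4 of \cite{dm}). Your approach is direct and self-contained: you bypass that external characterization by working with the operator $T: c_0 \to X$ associated to a given weakly $1$-summable sequence and using the Schur property of $\ell_1$ twice (first to get $T^{\ast}$ completely continuous, then to upgrade weak compactness of $T^{\ast}$ to norm compactness). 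Both arguments ultimately exploit the $c_0$--$\ell_1$ duality; the paper's version is shorter but leans on a cited theorem, while yours trades the citation for a slightly longer chain of elementary implications.
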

\begin{proof}
$ \rm{(i)} $ If $ X $ has the $ (RDP^{\ast}P)_{p} , $ then it contains no copy of $ c_{0}, $ since, 
  consider the sequence $ x_{n}=e_{1}+\cdot\cdot\cdot+e_{n} $ in $ c_{0}, $
where $ (e_{n})_{n} $ is the unit vector basis.\ Obviously $ \lbrace x_{n}: n\in \mathbb{N}\rbrace $ is  $ p $-$ (V^{\ast}) $ set in $ c_{0} $ which is not relatively weakly compact and so,
 $ c_{0} $ does not have the $p $-$(V^{\ast}) $ property.\ Therefore, $ X $ contain no copy of $ c_{0} .$\ Then by Theorem 2.4 in \cite{dm}, $ X $ has the $ 1 $-Schur property.\\
 $ \rm{(ii)} $ We adapt the proof of Theorem 3.3 in \cite{g6}.\ Suppose that $ X $ has the $ (RDP^{\ast}P)_{p} $ and $ Y $ is a closed separable subspace of $ X .$\ Then  any $ p $-$ (V^{\ast}) $ subset
of $ Y$ is also a $ p $-$ (V^{\ast}) $ set in $ X.$\ Hence, $ Y $ has the $ (RDP^{\ast}P)_{p} . $\
Conversely, suppose that any closed separable subspace of $ X $ has the $ (RDP^{\ast}P)_{p}  $  and let $ K $
be a subset of $ X $ which is not relatively weakly compact.\ We show that $ K $ is not a $ p $-$ (V^{\ast}) $ set in
$ X. $\ For this purpose, let $ (x_{n})_{n} $ be a sequence in $ K$ with no weakly convergent subsequence and let $ Y=[x_{n}] $ be the
closed linear span of $ (x_{n})_{n} .$\ Note that $ Y $ is a separable subspace of $ X. $\ By Lemma \ref{l3}, there is a
separable subspace $ Z $ of $ X $ and an isometric embedding $ J:Z^{\ast}\rightarrow X^{\ast} $ which satisfy the conditions
of Lemma \ref{l3}.\ Without loss generality, we assume that $ Z $ is closed.\ Therefore, by  our hypothesis $ Z $ has the $ (RDP^{\ast}P)_{p} .$\ Thus, $ (x_{n})_{n} $ is not a $ p $-$ (V^{\ast}) $ subset of $ Z. $\ Hence, there is a weakly $ p $-summable sequence $ (z^{\ast}_{n})_{n} $
in $ Z^{\ast} $ and a subsequence $ (x_{k_{n}}) $ of $ (x_{n})_{n} ,$ which we still
denote by $ (x_{n})_{n}, $ such that $ \langle z^{\ast}_{n}, x_{n}\rangle =1 $
for each $ n\in \mathbb{N}. $\
Let $ x^{\ast}_{n}=J(z^{\ast}_{n}) $
for each $ n\in \mathbb{N}. $\ It is clear that $ (x^{\ast}_{n})_{n} $
is weakly $ p $-summable in $ X^{\ast} $ and for each $ n, $
$ x^{\ast}_{n}(x_{n})=J(z^{\ast}_{n})(x_{n})=z^{\ast}_{n}(x_{n})=1. $\
Therefore, $ K $ is not a $ p $-$ (V^{\ast}) $ subset of $ X. $
 \end{proof}

Let $ (X_{n})_{n} $ be a sequence of Banach spaces and $ 1 \leq r < \infty .$ We denote
by $ (\displaystyle \sum_{n=1}^{\infty}\oplus X_{n})_{r}$ the space of all vector-valued sequences $ x = (x_{n})_{n} $ with $ x_{n}\in X_{n} ~(n\in\mathbb{N}), $ for which
\begin{center} $ \Vert x \Vert=(\displaystyle \sum_{n=1}^{\infty} \Vert x_{n}\Vert^{r})^{\frac{1}{r}}<\infty.$
\end{center} Similarly,\\ $ (\displaystyle \sum_{n=1}^{\infty}\oplus X_{n})_{c_{0}}$
denotes the space of all vector-valued sequences $ x = (x_{n})_{n} $
with $ x_{n}\in X_{n} ~(n\in\mathbb{N}), $ for which $ \displaystyle\lim_{n}\Vert x_{n}\Vert=0, $ endowed with the supreme norm.\\

As an immediate consequence of the Theorems 3.5 and 3.9 in \cite{ccl1}, we can conclude that
the following result.
\begin{corollary}\label{c7}\em
$ \rm{(i)} $ Let $ (X_{n})_{n} $ be a sequence of Banach spaces, $ 1 \leq r < \infty $ and $ 1< p<\infty. $\ Then $ (\sum_{n=1}^{\infty}\oplus X_{n})_{p} \in (RDPP)_{r} $
if and only if $ X_{n}$ has the $ (RDPP)_{r}, $ for each $ n\in \mathbb{N}. $\\
$ \rm{(ii)} $ Let $ (X_{n})_{n} $ be a sequence of Banach spaces.\ Then $ (\sum_{n=1}^{\infty}\oplus X_{n})_{c_{0}} $ has the $ (RDPP)_{1} $ if and only if $ X_{n}$ has the $ (RDPP)_{1}, $ for each $ n\in \mathbb{N}. $\\
$ \rm{(iii)} $ Let $ (X_{n})_{n} $ be a sequence of Banach spaces, $ 1 \leq r < \infty , $ $ 1< p<\infty $ and $ X=(\sum_{n=1}^{\infty}\oplus X_{n})_{p} $ or $ X=(\sum_{n=1}^{\infty}\oplus X_{n})_{c_{0}} . $\ Then $ X$ has the $ (RDP^{\ast}P)_{r} $ if and only if $ X_{n}$ has the $ (RDP^{\ast}P)_{r}, $ for each $ n\in \mathbb{N}. $\
\end{corollary}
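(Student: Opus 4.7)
The plan is to reduce all three assertions to stability theorems already available in \cite{ccl1}. Recall, as noted after Definitions \ref{d1} and \ref{d2}, that the property $(RDPP)_{r}$ coincides with Pelczy\'nski's property $(V)$ of order $r$ and $(RDP^{\ast}P)_{r}$ coincides with property $(V^{\ast})$ of order $r$. Under these identifications, parts (i) and (ii) become the stability assertions for the property $(V)$ of order $r$ under $\ell_{p}$- and $c_{0}$-direct sums, which is the content of \cite[Theorem 3.5]{ccl1}; part (iii) becomes the corresponding stability statement for property $(V^{\ast})$ of order $r$, which is \cite[Theorem 3.9]{ccl1}. In this sense the corollary is genuinely \emph{immediate}, and the proof reduces to recording these two translations carefully.

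For necessity in each part I would use that each $X_{n}$ is $1$-complemented in the direct sum $X$ via the natural coordinate projection. For (i) and (ii) this is already supplied by Corollary \ref{c2}(i), which passes $(RDPP)_{r}$ to quotients. For (iii) the analogous heredity is not explicitly recorded, so I would derive it from Theorem \ref{t4} as follows: if $K\subset X_{n}$ is a $p$-$(V^{\ast})$ set and $(f_{k})_{k}$ is a weakly $p$-summable sequence in $X^{\ast}$, then its restriction to $X_{n}$ is weakly $p$-summable in $X_{n}^{\ast}$, so $\sup_{x\in K}|f_{k}(x)|\to 0$; hence $K$ is a $p$-$(V^{\ast})$ set in $X$, and relative weak $r$-compactness in $X$ transfers back to $X_{n}$ through the projection.

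For sufficiency I would take a $p$-$(V)$ set $K$ in the dual of the direct sum (respectively a $p$-$(V^{\ast})$ set $K$ in the direct sum itself), and apply Theorem \ref{t2} (respectively Theorem \ref{t4}) to reduce the task to showing that $K$ is relatively weakly $r$-compact. This is where the decomposition arguments of \cite[Theorems 3.5, 3.9]{ccl1} do the real work: one splits along the finite-rank projections $P_{N}$ onto the first $N$ coordinates, controls the tail $(I-P_{N})(K)$ by a uniform norm estimate that leverages the $\ell_{p}$- or $c_{0}$-structure, and applies the hypothesis on each $X_{1},\dots,X_{N}$ to the head. The main obstacle is essentially bookkeeping: one must verify that the parameter ranges $1\leq r<\infty$ and $1<p<\infty$ in (i) and (iii), and the pairing of the $c_{0}$-sum with $r=1$ in (ii), are exactly the cases in which \cite[Theorems 3.5, 3.9]{ccl1} apply, and that the translation between $p$-$(V)$ / $p$-$(V^{\ast})$ sets and the $(RDPP)_{r}$ / $(RDP^{\ast}P)_{r}$ properties behaves correctly in the presence of the direct-sum structure.
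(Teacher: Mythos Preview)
Your proposal is correct and follows exactly the paper's approach: the paper states the corollary as an immediate consequence of Theorems 3.5 and 3.9 in \cite{ccl1}, and you do precisely this reduction, using the identifications $(RDPP)_r \equiv$ property $(V)$ of order $r$ and $(RDP^{\ast}P)_r \equiv$ property $(V^{\ast})$ of order $r$ noted after Definitions \ref{d1} and \ref{d2}. Your additional remarks on necessity via Corollary \ref{c2}(i) and Theorem \ref{t4} are more detailed than what the paper provides, but they are consistent with the intended argument.
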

The following example shows that there are Banach spaces
$ X $ and $  Y$ such that
$ K_{w^{\ast}}(X^{\ast}, Y ) $ has Pelczy$\acute{n} $ski's property $ (V^{\ast}) $ of order
$ p. $\
\begin{example}\rm\label{e2} Let $ 1< p< \infty. $
Suppose that $ 1 <  r < q < \infty. $\ By Pitt theorem (see \cite{AlbKal}), $L(\ell_{q} ,\ell_{r}) = K(\ell_{q} ,\ell_{r}) .$\ Also, it is known that $ L(\ell_{q} ,\ell_{r})  $ is reflexive (see \cite{k}).\ Therefore,  $K(\ell_{q} ,\ell_{r}) \simeq K_{w^{\ast}}(\ell_{q}^{\ast\ast},\ell_{r})=L_{w^{\ast}}(\ell_{q}^{\ast\ast},\ell_{r})$
has Pelczy$\acute{n} $ski's property $ (V^{\ast}) $ of order
$ p. $\ Hence,  the spaces $ X=\ell_{q}^{\ast} $
 and $ Y=\ell_{r} $ are as desired.
\end{example}
\begin{theorem}\label{t6}
 $ \rm{(i)} $ Suppose that $L_{w^{\ast}}(X^{\ast} ,Y)= K_{w^{\ast}}(X^{\ast} ,Y) .$\ If both $  X$ and $ Y $
have Pelczy$\acute{n} $ski's property $ (V^{\ast}) $ of order
$ p,$ then $ K_{w^{\ast}}(X^{\ast} ,Y) $ has the same property.\\
$ \rm{(ii)} $ Suppose that $L(X ,Y)= K(X ,Y) .$\ If $ X^{\ast} $ and $ Y $ have Pelczy$\acute{n} $ski's property $ (V^{\ast}) $ of order
$ p, $ then $ K(X, Y ) $ has the same property.\
\end{theorem}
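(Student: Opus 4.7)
\noindent\emph{Proof plan.} The strategy is to use the defining characterization of Pelczy\'nski's property $(V^{\ast})$ of order $p$: a space $E$ has this property iff every $p$-$(V^{\ast})$ subset of $E$ is relatively weakly compact. Thus for (i), given a $p$-$(V^{\ast})$ subset $H$ of $K_{w^{\ast}}(X^{\ast}, Y)$, I must show $H$ is relatively weakly compact, and I would do so by verifying the two evaluation-slice conditions of {\rm (\cite[Theorem 4]{g24})} already used in the proof of Theorem \ref{t3}. The hypothesis $L_{w^{\ast}}(X^{\ast}, Y) = K_{w^{\ast}}(X^{\ast}, Y)$ is precisely what allows the application of that criterion inside the compact-operator space.

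The technical core is the following. For each $x^{\ast} \in X^{\ast}$, define $\Phi_{x^{\ast}}: Y^{\ast} \to K_{w^{\ast}}(X^{\ast}, Y)^{\ast}$ by $\Phi_{x^{\ast}}(v^{\ast})(T) = v^{\ast}(T(x^{\ast}))$, and for each $y^{\ast} \in Y^{\ast}$, define $\Psi_{y^{\ast}}: X^{\ast} \to K_{w^{\ast}}(X^{\ast}, Y)^{\ast}$ by $\Psi_{y^{\ast}}(u^{\ast})(T) = y^{\ast}(T(u^{\ast}))$. Both are bounded linear, and bounded linear operators preserve weakly $p$-summable sequences; hence whenever $(y_n^{\ast})_n$ is weakly $p$-summable in $Y^{\ast}$, the sequence $(\Phi_{x^{\ast}}(y_n^{\ast}))_n$ is weakly $p$-summable in the dual of $K_{w^{\ast}}(X^{\ast}, Y)$. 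Since $H$ is a $p$-$(V^{\ast})$ set, $\sup_{T \in H} |y_n^{\ast}(T(x^{\ast}))| \to 0$, which says that $\{T(x^{\ast}): T \in H\}$ is a $p$-$(V^{\ast})$ subset of $Y$; by the same argument, using that every $T \in K_{w^{\ast}}(X^{\ast}, Y)$ satisfies $T^{\ast}(Y^{\ast}) \subseteq X$ in virtue of its $w^{\ast}$-$w$ continuity, $\{T^{\ast}(y^{\ast}): T \in H\}$ is a $p$-$(V^{\ast})$ subset of $X$. The hypothesis that both $X$ and $Y$ enjoy property $(V^{\ast})$ of order $p$ then turns both slice families into relatively weakly compact sets, and {\rm (\cite[Theorem 4]{g24})} closes the argument.

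For (ii) I would repeat the same plan verbatim inside $K(X, Y)$: the slices are $\{T(x): T \in H\} \subseteq Y$ for $x \in X$ and $\{T^{\ast}(y^{\ast}): T \in H\} \subseteq X^{\ast}$ for $y^{\ast} \in Y^{\ast}$, the $p$-$(V^{\ast})$-ness follows from bounded linearity of the evaluation maps, the $(V^{\ast})$ property of $Y$ and of $X^{\ast}$ delivers relative weak compactness of each slice, and $L(X, Y) = K(X, Y)$ lets the criterion apply. The main obstacle I expect is essentially bibliographic: confirming that {\rm (\cite[Theorem 4]{g24})} is stated with enough generality to cover both $K_{w^{\ast}}(X^{\ast}, Y)$ and $K(X, Y)$ under the respective $L = K$ hypotheses. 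Everything beyond that is a routine transcription of the mechanism already developed in Theorem \ref{t3}.
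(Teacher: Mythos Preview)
Your proposal is correct and follows essentially the same route as the paper: take a $p$-$(V^{\ast})$ set $H$, show that both pointwise slices $H(x^{\ast})\subseteq Y$ and $H^{\ast}(y^{\ast})\subseteq X$ are $p$-$(V^{\ast})$ sets (the paper phrases this simply as ``continuous linear images of $p$-$(V^{\ast})$ sets are $p$-$(V^{\ast})$ sets'', using the evaluation maps $T\mapsto T(x^{\ast})$ and $T\mapsto T^{\ast}(y^{\ast})$ directly rather than your dual formulation via $\Phi_{x^{\ast}}$ and $\Psi_{y^{\ast}}$, but the content is identical), apply the $(V^{\ast})$-of-order-$p$ hypothesis on $X$ and $Y$ to make each slice relatively weakly compact, and then invoke a pointwise-weak-compactness criterion for operator spaces. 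The only real discrepancy is the one you anticipated: the paper cites {\rm \cite[Theorem 4.8]{g17}} (Ghenciu--Lewis, \emph{Almost weakly compact operators}) rather than {\rm \cite[Theorem 4]{g24}}, precisely because the former is formulated for $K_{w^{\ast}}(X^{\ast},Y)$ under the hypothesis $L_{w^{\ast}}(X^{\ast},Y)=K_{w^{\ast}}(X^{\ast},Y)$, so your bibliographic caveat was well placed.
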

\begin{proof}
Since the proofs of (i)  and (ii) are essentially the same, we only present that of (i).\\
$ \rm{(i)} $ Suppose $ X $ and $ Y $ have Pelczy$\acute{n} $ski's property $ (V^{\ast}) $ of order
$ p. $\ Let $ H$ be a
$ p $-$ (V^{\ast}) $ subset of $ K_{w^{\ast}}(X^{\ast} ,Y). $\
For fixed $ x^{\ast}\in X^{\ast} $ the map $ T\mapsto T(x^{\ast}) $ is a bounded
operator from $ K_{w^{\ast}}(X^{\ast} ,Y) $ into $ Y. $\ It is easily verified that continuous linear images of $ p $-$ (V^{\ast}) $ sets are $ p $-$ (V^{\ast}) $ sets.\ Therefore, $ H(x^{\ast}) $
is a $ p $-$ (V^{\ast}) $ subset of $ Y, $ hence relatively
weakly compact.\ For fixed $ y^{\ast}\in Y^{\ast} $ the map $ T\mapsto T^{\ast}(y^{\ast}) $
is a bounded linear operator from
$ K_{w^{\ast}}(X^{\ast} ,Y) $ into $ X. $\ Therefore, $ H^{\ast}(y^{\ast}) $
is a $ p $-$ (V^{\ast}) $ subset of $ X, $ hence relatively weakly compact.\ Hence,  {\rm (\cite[Theorem 4.8]{g17})},
 implies that $ H $ is relatively weakly compact.\
\end{proof}
\begin{corollary}\label{c8}
 $ \rm{(i)} $ Suppose that $L_{w^{\ast}}(X^{\ast} ,Y)= K_{w^{\ast}}(X^{\ast} ,Y) .$\ If both $  X$ and $ Y $
have the $ (RDP^{\ast}P)_{p} ,$
then $ K_{w^{\ast}}(X^{\ast} ,Y) $ has the same property.\\
$ \rm{(ii)} $ Suppose that $L(X ,Y)= K(X ,Y) .$\ If $ X^{\ast} $ and $ Y $ have the $ (RDP^{\ast}P)_{p} ,$  then $ K(X, Y ) $ has the same property.\
\end{corollary}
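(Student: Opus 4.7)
The plan is to reduce Corollary \ref{c8} to Theorem \ref{t6} via the identification of the $(RDP^{\ast}P)_p$ property with Pelczy\'{n}ski's property $(V^{\ast})$ of order $p$. This identification is recorded immediately after Definition \ref{d2}, but more importantly it falls out as the $q=\infty$ specialization of Theorem \ref{t4}: a Banach space $Z$ has the $(RDP^{\ast}P)_p$ if and only if every $p$-$(V^{\ast})$ subset of $Z$ is relatively weakly compact, which is exactly the definition of the $p$-$(V^{\ast})$ property. So the first step I would make explicit at the top of the proof is this equivalence, since the hypotheses of Corollary \ref{c8} are phrased in one language while Theorem \ref{t6} is phrased in the other.

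For part (i), assuming the operator identification $L_{w^{\ast}}(X^{\ast},Y) = K_{w^{\ast}}(X^{\ast},Y)$ and that $X$ and $Y$ both have $(RDP^{\ast}P)_p$, I would use the equivalence above to conclude that both $X$ and $Y$ satisfy Pelczy\'{n}ski's property $(V^{\ast})$ of order $p$. Then Theorem \ref{t6}(i) applies verbatim and yields the $p$-$(V^{\ast})$ property for $K_{w^{\ast}}(X^{\ast},Y)$, after which a second invocation of the equivalence translates this back to $(RDP^{\ast}P)_p$ for $K_{w^{\ast}}(X^{\ast},Y)$, closing the argument.

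Part (ii) is proved in the same way, now with the hypothesis $L(X,Y) = K(X,Y)$ and $(RDP^{\ast}P)_p$ on $X^{\ast}$ and $Y$; one feeds these into Theorem \ref{t6}(ii) and translates the conclusion back. Because the corollary is simply a direct transfer across a previously established equivalence, I do not expect any substantive obstacle. The only point requiring a small amount of care is confirming that the $q=\infty$ case of Theorem \ref{t4} really does give the coincidence between $(RDP^{\ast}P)_p$ and the $p$-$(V^{\ast})$ property, but this is immediate since weakly $\infty$-compact sets are precisely weakly compact sets, as noted in the preliminaries.
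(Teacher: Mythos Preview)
Your proposal is correct and matches the paper's approach: the paper states Corollary \ref{c8} without proof, treating it as an immediate restatement of Theorem \ref{t6} under the identification of the $(RDP^{\ast}P)_{p}$ with Pelczy\'{n}ski's property $(V^{\ast})$ of order $p$ (noted after Definition \ref{d2} and, as you observe, recoverable from the $q=\infty$ case of Theorem \ref{t4}). Your write-up simply makes this translation explicit, which is exactly what the paper leaves implicit.
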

\begin{remark}\rm\label{r1}\rm
 We know that {\rm (\cite[Theorem 20]{g22})}, shows that $ c_{0}\hookrightarrow  K_{w^{\ast}} (\ell_{2}, \ell_{2}) $
and the identity operator from $ \ell_{2} $ to $ \ell_{2} $ shows that $L_{w^{\ast}} (\ell_{2}, \ell_{2})\neq K_{w^{\ast}} (\ell_{2}, \ell_{2}). $\ In the other word, it is clear that $ (z_{n})=(\sum _{i=1}^{n} e_{i}) $
 is a Dunford-Pettis set which is not relatively weakly compact.\ Therefore,
$ c_{0} $
 does not have the  $ (RDP^{\ast}P) $ and so, does not have the  $ (RDP^{\ast}P) _{p}.$\
Hence,
the space $K_{w^{\ast}} (\ell_{2}, \ell_{2}) $
does
not have the $ (RDP^{\ast}P)_{p} ,$ while $ \ell_{2} $ has this property.\ Hence, the condition $L_{w^{\ast}}(X^{\ast} ,Y)= K_{w^{\ast}}(X^{\ast} ,Y) $ in  Theorem \ref{t6}  and  Corollary \ref{c8}  is necessary.
\end{remark}
\begin{corollary}\label{c9}
$ \rm{(i)} $ Suppose that $ L(X,Y^{\ast}) =K(X,Y^{\ast}).$\ If $ X^{\ast} $ and $ Y^{\ast} $ have  the $ (RDP^{\ast}P)_{p} ,$ then
$ X\widehat{\bigotimes}_{\pi} Y$ does not contain any complemented copy of $ \ell_{1}. $\\
$ \rm{(ii)} $ Suppose that $ Y $ has the Schur property and $ X $ has the $ (RDP^{\ast}P)_{p}. $\
Then $L_{w^{\ast}}(X^{\ast} ,Y)= K_{w^{\ast}}(X^{\ast} ,Y) $ has the $ (RDP^{\ast}P)_{p}. $\\
$ \rm{(iii)} $ Suppose that $ X^{\ast} $ has the Schur property and $ Y $ has the $ (RDP^{\ast}P)_{p}. $\
Then $L(X,Y)= K(X ,Y) $ has the the $ (RDP^{\ast}P)_{p}. $\.\\
$ \rm{(iv)} $ Suppose that $ X $ has the $ (RDP^{\ast}P)_{p}. $\ Then
the space $ \ell_{1}[X] $ of all unconditionally convergent series in $ X $ with norm
$$ \Vert (x_{n}) \Vert=\sup \lbrace \sum \vert x^{\ast}(x_{n})\vert :x^{\ast}\in B_{X^{\ast}}\rbrace,$$
has the same  property.
\end{corollary}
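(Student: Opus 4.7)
My plan is to handle the four parts in order, leveraging the stability results already established. \emph{Part (i)}: I pass to the dual. Via $(X \widehat{\bigotimes}_{\pi} Y)^{\ast}\cong L(X,Y^{\ast})=K(X,Y^{\ast})$, I apply Corollary \ref{c8}(ii) with $Y^{\ast}$ in place of $Y$ (legitimate since $X^{\ast}$ and $Y^{\ast}$ both have $(RDP^{\ast}P)_{p}$) to conclude that $(X \widehat{\bigotimes}_{\pi} Y)^{\ast}$ has $(RDP^{\ast}P)_{p}$. If $X \widehat{\bigotimes}_{\pi} Y$ contained a complemented copy of $\ell_{1}$, then dualizing this complemented embedding would place a complemented copy of $\ell_{\infty}$, and hence a copy of $c_{0}$, inside $(X \widehat{\bigotimes}_{\pi} Y)^{\ast}$. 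But the proof of Theorem \ref{t5}(i) rules out copies of $c_{0}$ in any space with $(RDP^{\ast}P)_{p}$---the summing sequence $e_{1}+\cdots+e_{n}$ furnishes a $p$-$(V^{\ast})$ set in $c_{0}$ that is not relatively weakly compact---so this yields the desired contradiction.

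\emph{Parts (ii) and (iii)}: both reduce to direct applications of Theorem \ref{t6}(i) and (ii) respectively, once the hypotheses are matched. Since $(RDP^{\ast}P)_{p}$ coincides with Pelczy\'{n}ski's $(V^{\ast})$ property of order $p$, the hypothesized space ($X$ in (ii), $Y$ in (iii)) already has $(V^{\ast})_{p}$. The Schur hypothesis supplies the same for the other factor: a Schur space enjoys the classical Pelczy\'{n}ski property $(V^{\ast})=(V^{\ast})_{1}$, and because the class of $p$-$(V^{\ast})$ subsets of any space is decreasing in $p$, we have $(V^{\ast})_{1}\Rightarrow (V^{\ast})_{p}$ for every $p\geq 1$. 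With both spaces then possessing $(V^{\ast})_{p}$, Theorem \ref{t6}(i) delivers (ii) and Theorem \ref{t6}(ii) delivers (iii).

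\emph{Part (iv)}: I identify $\ell_{1}[X]$ with $L(c_{0},X)$ via the canonical isometry $T\mapsto (Te_{n})_{n}$, under which the $\ell_{1}[X]$-norm matches the operator norm. Because $X$ has $(RDP^{\ast}P)_{p}$, Theorem \ref{t5}(i) gives the $1$-Schur property for $X$, which in particular rules out copies of $c_{0}$; the Bessaga-Pelczy\'{n}ski $c_{0}$-theorem then forces $L(c_{0},X)=K(c_{0},X)$. Applying part (iii) with $c_{0}$ in the role of $X$ and the original $X$ in the role of $Y$ (valid since $c_{0}^{\ast}=\ell_{1}$ is Schur and $X$ has $(RDP^{\ast}P)_{p}$) gives that $K(c_{0},X)\cong \ell_{1}[X]$ has the $(RDP^{\ast}P)_{p}$. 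The main obstacle is the auxiliary ``Schur $\Rightarrow (V^{\ast})_{p}$'' invoked in (ii) and (iii); once the classical fact that Schur spaces satisfy Pelczy\'{n}ski's $(V^{\ast})$ is granted, everything else is routine bookkeeping with the tools at hand.
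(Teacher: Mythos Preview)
Your overall strategy matches the paper's, and parts (i) and (iv) are handled correctly; your extra step in (iv) showing $L(c_0,X)=K(c_0,X)$ via the absence of $c_0$ in $X$ is in fact more explicit than the paper, which simply cites the isometry $\ell_1[X]\cong K(c_0,X)$. However, in parts (ii) and (iii) there is a real gap: the equalities $L_{w^*}(X^*,Y)=K_{w^*}(X^*,Y)$ and $L(X,Y)=K(X,Y)$ are part of the \emph{conclusion} of the corollary, and they are simultaneously the standing hypotheses required to invoke Theorem~\ref{t6}. You write ``once the hypotheses are matched'' but then match only the $(V^*)_p$ hypotheses (via Schur $\Rightarrow (V^*)_1 \Rightarrow (V^*)_p$), never the $L=K$ ones, so your appeal to Theorem~\ref{t6} is unjustified as written.

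For (ii) the repair is immediate and is exactly what the paper does: any $w^*$-$w$ continuous $T:X^*\to Y$ is automatically weakly compact, and the Schur property of $Y$ upgrades this to compactness, so $L_{w^*}(X^*,Y)=K_{w^*}(X^*,Y)$. For (iii) (which the paper calls ``obvious'') the parallel argument is: $X^*$ Schur forces $\ell_1\not\hookrightarrow X$, so by Rosenthal's theorem every $T:X\to Y$ sends $B_X$ to a weakly precompact set; since $(RDP^*P)_p$ on $Y$ makes $Y$ weakly sequentially complete (weakly Cauchy sequences are Dunford--Pettis, hence $p$-$(V^*)$, hence relatively weakly compact), $T$ is weakly compact, so $T^*$ is weakly compact into the Schur space $X^*$ and therefore compact, giving $T$ compact. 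Once these equalities are in hand, your application of Theorem~\ref{t6} (equivalently Corollary~\ref{c8}) goes through.
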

\begin{proof}
$ \rm{(i)} $ By
Corollary \ref{c8}, $ K(X,Y^{\ast}) $ has the $ (RDP^{\ast}P)_{p}. $\  Hence, $L(X,Y^{\ast})=(X\widehat{\bigotimes}_{\pi} Y)^{\ast} $ has the same property.\ Since $ c_{0} $ does not have the  $ (RDP^{\ast}P)_{p},$ $ (X\widehat{\bigotimes}_{\pi} Y)^{\ast} $ does not contain a copy of $ c_{0}. $\ Hence by a result
of Bessaga and Pelczyinski, $ X\widehat{\bigotimes}_{\pi} Y$ does not contain any complemented copy of $ \ell_{1}. $\\
$ \rm{(ii)} $ Let $T\in L_{w^{\ast}}(X^{\ast} ,Y).  $\ Since $ T $ is $ w^{\ast} $-$ w $ continuous, $ T $ is weakly compact.\ Hence $ T $ is compact, since $ Y $ is a Schur space.\ Since $ Y $ has  $ (RDP^{\ast}P)_{p} $ (see Corollary 18 in \cite{g18}),
an application of Corollary \ref{c8} (i) gives that $  K_{w^{\ast}}(X^{\ast} ,Y) $ has the $ (RDP^{\ast}P)_{p}. $\\
$ \rm{(iii)} $ is obvious.\\
$ \rm{(iv)} $ It is known that $ \ell_{1}[X] $ is isometrically isomorphic to $ K(c_{0}, X) $ (see \cite{e2}).\ Since $ X $ has the $ (RDP^{\ast}P)_{p}$ and $ c_{0}^{\ast}=\ell_{1} $ has the Schur property.\ Apply $ \rm{(iii)}. $
\end{proof}

\end{document}